\numberwithin{equation}{section}
\theoremstyle{plain}
\newtheorem{theorem}{Theorem}[section]
\newtheorem{proposition}[theorem]{Proposition}
\newtheorem{lemma}[theorem]{Lemma}
\newtheorem{remark}[theorem]{Remark}
\newtheorem{definition}[theorem]{Definition}
\newcommand{\na}{\widehat{n}}
\newcommand{\pa}[1]{{\left(#1\right)}}
    \newcommand{\set}[1]{{\left\{#1\right\}}}
\newcommand{\norm}[1]{{\left |#1\right |}}
\newcommand{\T}{\mathbb{T}}
\newcommand{\Z}{\mathbb{Z}}
\newcommand{\R}{\mathbb{R}}
\newcommand{\C}{\mathbb{C}}
\newcommand{\co}[1]{\textit{#1}}
\newcommand{\gr}[1]{\textbf{#1}}
\newcommand{\s}{{\sigma}}
\newcommand{\ii}{{\rm i}}
\def\wc{ {}}
\newcommand{\N}{{\mathbb N}}
\newcommand{\cF}{{\mathcal F}}
\newcommand{\cH}{{\mathcal H}}
\newcommand{\cK}{{\mathcal K}}
\newcommand{\cM}{{\mathcal M}}
\newcommand{\cP}{{\mathcal P}}
\newcommand{\cR}{{\mathcal R}}
\newcommand{\td}{{\mathtt{d}}}
\newcommand{\tm}{{\mathtt{m}}}
\newcommand{\tr}{{\mathtt{r}}}
\newcommand{\tD}{{\mathtt{D}}}
\newcommand{\tN}{{\mathtt{N}}}
\newcommand{\al}{{\alpha}}
\newcommand{\bt}{{\beta}}
\newcommand{\abs}[1]{\left| #1 \right|}
\newcommand{\jap}[1]{\langle #1 \rangle}
\newcommand{\und}[1]{\underline{#1}}
\newcommand{\e}{{\varepsilon}}
\newcommand{\meas}{{\mathtt{meas}}}
\newcommand{\tw}{{\mathtt{w}}}
\renewcommand{\th}{{\mathtt{h}}}
\newcommand{\nnorm}[1]{{\left\vert\kern-0.25ex\left\vert\kern-0.25ex\left\vert #1 
    \right\vert\kern-0.25ex\right\vert\kern-0.25ex\right\vert}}
\newcommand{\ri}{r}
\newcommand{\rf}{{r'}}
\newcommand{\twi}{\tw}
\newcommand{\twf}{{\tw'}}
\newcommand{\imm}{{\rm{i}}}
\definecolor{aqua}{RGB}{10,150,200}
\def\l@subsection{\@tocline{2}{0pt}{2.5pc}{5pc}{}}
\def\l@subsubsection{\@tocline{3}{0pt}{4.5pc}{5pc}{}}
\begin{document}

\title{Non-resonant conditions for the Klein-Gordon equation on the circle
}

\date{}

\author{Roberto Feola}
\address{\scriptsize{Dipartimento di Matematica e Fisica, Universit\`a degli Studi RomaTre, 
Largo San Leonardo Murialdo 1, 00144 Roma}}
\email{roberto.feola@uniroma3.it}

\author{Jessica Elisa Massetti}
\address{\scriptsize{Dipartimento di Matematica, Universit\`a degli Studi di Roma ``Tor Vergata", 
Via della Ricerca Scientifica 1, 00133 Roma}}
\email{massetti@mat.uniroma2.it}

 
\keywords{Wave equations, diophantine conditions, degenerate KAM theory.
}

\subjclass[2010]{35L05, 37K55, 
37J40
} 

\begin{abstract}    
We consider the infinite dimensional vector of frequencies $\omega(\mathtt{m})=( \sqrt{j^2+\mathtt{m}})_{j\in \Z}$, $\mathtt{m}\in [1,2]$
arising form a linear Klein-Gordon equation on the one dimensional torus and prove that there exists a positive measure set of masses $\mathtt{m}'$s for which $\omega(\mathtt{m})$ satisfies
a diophantine condition similar to the one introduced by Bourgain in \cite{Bourgain:2005}, 
in the context of Schr\"odinger equation with convolution potential.
The main difficulties we have to deal with are  
the asymptotically linear nature of the (infinitely many) $\omega_{j}'$s and the degeneracy coming from having 
only one parameter at disposal for their modulation.
As an application we provide estimates on the inverse of the adjoint action of the associated quadratic Hamiltonian 
on homogenenous polynomials of any degree in Gevrey category.
\end{abstract}

\maketitle 
\setcounter{tocdepth}{2}
\tableofcontents

\section{Introduction} 
The study of existence of periodic/quasi-periodic motions plays a pivotal role in the general understanding
of the evolution of a dynamical system.
The relevance of such special solutions, in the context of $n$-dimensional Hamiltonian systems, $n\geq1$, 
was first highlighted by Poincar\'e in the \textit{Les M\'ethodes nouvelles de la m\'ecanique c\'eleste} \cite{PoH}; since then,  many authors embraced the investigation of possible quasi-periodic dynamics, giving birth to fruitful fields of research, at the crossroads with dynamical system, geometry and functional analysis.
Among the many results in this field, a breakthrough  has been achieved in 1954 by Kolmogorov \cite{Kolmogorov:1954},
and  the subsequent works of Arnold \cite{Arnold:1963} in 1963 and Moser \cite{Moser:1962} in 1962, opening the way to what is now known as
\emph{KAM theory}.
 The core of their results is 
 that 
 a large measure set of quasi-periodic invariant tori
of a completely  integrable Hamiltonian system  survive 
 sufficiently small  perturbations, under appropriate non-degeneracy conditions.
 A crucial point in proving such a measure-theoretic statement
 is to control resonant/non-resonant interactions $\omega\cdot\ell=\omega_{1}\ell_1+\ldots+\omega_{n}\ell_{n}$ among the frequencies of oscillations
 $\omega\in \R^{n}$ characterising the motion, by imposing quantitative lower bounds to ensure that $\omega\cdot\ell$ is sufficiently  away from zero.
 This is a purely arithmetic problem concerning Diophantine-type inequalities.
 
 \smallskip
 
 In the present paper we shall discuss some aspects of such Diophantine conditions
 in the context of \emph{infinite}-dimensional dynamical systems, close to an elliptic fixed point, 
corresponding to a Hamiltonian partial differential equation. 
More precisely our aim is to focus
 on the frequencies arising from the $1$-parameter family of Klein-Gordon equations of the form
\begin{equation}\label{KG}
\psi_{tt} - \psi_{xx} + \mathtt{m}\psi = \psi^2\,,\qquad x\in\T:=\mathbb{R}/2\pi\Z, \,\; \mathtt{m}\in [1,2]\,,
\end{equation}
(or in presence of a more general non linear term). 
It is know that a good knowledge of the Diophantine properties of the linear frequencies 
of oscillation are fundamental in order to study behaviour of solutions, for long times, of equations like \eqref{KG}.

\smallskip
Let us describe our point of view, coming from dynamical system, in a more general setting. So, let us
consider equations of the form
\begin{equation}\label{irene1}
 \mathfrak{i} u_t  = Lu + N(u)\,,\qquad u=u(x,t)\,,\quad x\in \T:=\R/2\pi\Z
 \end{equation}
where 
\begin{itemize}
\item $u$ belongs to some  Hilbert  subspace of $L^{2}(\T,\C)$;
\item $L$ is a typically unbounded self-adjoint operator with real pure point spectrum $\{\omega_{j}\}_{j\in\Z}\subset\R$ 
\item the nonlinear term $N(u) \sim O(u^{q+1}),\, q\ge 1$.
\end{itemize}
Passing to the Fourier side, i.e.
setting
\[
u(x)=\sum_{j\in\Z}u_je^{\ii jx}\,,\quad (u_{j})_{j\in\Z}\in\ell^{2}(\C)\,,
\]
equation \eqref{irene1} reads as infinitely many linear oscillators coupled by the nonlinearity $N(u)$, i.e. 
\begin{equation}\label{irene2}
 \dot{u}_j =-\ii  \omega_j u_j -\ii  [N(u)]_j\,,\quad j\in\Z\,,
\end{equation}
where the linear frequencies of oscillations are given by the eigenvalues of $L$.
In the following, we shall assume that
\[
[N(u)]_j=\partial_{\bar{u}_{j}}P(u)\,,\qquad P(u)=\int_{\T}F(\sum_{j\in\Z}u_je^{\ii jx})dx\,,
\]
where $F$ is a real analytic function in the neighbourhood of the origin and $F(u)\sim O(u^3)$, so  that we can describe \eqref{irene2}  as an infinite dimensional Hamiltonian system
w.r.t. the symplectic form $\imm \sum_{j\in\Z} du_j\wedge d\bar{u}_j$
and corresponding Hamiltonian 
\begin{equation}\label{carla}
H=\sum_{j\in \Z}\omega_{j}|u_j|^2+P\,,
\quad \quad 
X_H^{(j)} = -\imm \frac{\partial }{\partial{\bar{u}_j}}H(u) \,.
\end{equation}

At the linear level, namely when $P\equiv0$, the solution is
\[
u(t)=\sum_{j\in S}u_{j}(0)e^{\ii\omega_j t} e^{\ii jx}\,,\qquad S:=\{j\in \Z : |u_{j}(0)|\neq 0\}\subseteq\Z\,.
\]
Depending on the arithmetic properties of the frequency vector
$\omega=(\omega_j)_{j\in\Z}\in\mathbb{R}^{\mathbb{Z}}$, the above function $u(t)$ is

\begin{itemize}[leftmargin=*]
\item \emph{almost}-periodic: if 
the set $S$ has cardinality $+\infty$ and one has
\begin{equation}\label{nonrisonanzaS}
\omega\cdot\ell=\sum_{j\in\Z}\omega_{j}\ell_{j}\neq 0\qquad \forall\ell\in\Z^{\Z}\;:\; 0<|\ell|<+\infty\,,
\qquad \ell_{k}=0\,,\;\;\forall k\in S^{c}
\end{equation}


\item \emph{quasi}-periodic:  if
the set $S$ has cardinality $\td <+\infty$ and 
\eqref{nonrisonanzaS} holds true.

\item \emph{periodic}: for any $S$ there exists $T\in \R$ such that $T\omega_{j}\in \Z$ for any $j\in S$.

\end{itemize}

A natural question concerns the possibility of stable behavior, that is whether or not the nonlinear equation \eqref{irene2} possesses solutions 
that remain \emph{close} (in some topology) to the ``linear" ones for long time scales. A stronger notion of stability is whether system \eqref{irene2} still admits periodic, quasi-periodic or almost-periodic solutions, namely if there are invariant subsets 
for the motion, which then remains perpetually confined on such.

 Many partial differential equations admit a form like
 \eqref{irene1}, possibly after appropriate variable's change.
 As pivotal examples we refer to the Nonlinear Schr\"odinger equation, or the celebrated Kortweg- de-Vries equation:
 \begin{align}
 &\ii u_{t}-u_{xx}+\mathtt{m}u=|u|^2u\,,\tag{NLS}
 \\
 &u_{t}-u_{xxx}+uu_{x}=0\tag{KdV}\,,
 \end{align}
 for which it is immediate to see that they have the form \eqref{irene1} with $L=\partial_{xx}-\mathtt{m}$ and $L=\ii \partial_{xxx}$ respectively. 
 In the case of our interest, the  Klein-Gordon model \eqref{KG} can be written as a system of order one
 in terms of the variables $(\psi,v)$ where $v=\dot{\psi}$ and  successively, by introducing suitable complex coordinates, it can be written as 
 in \eqref{irene1}. This is possible using, for instance, 
 the complex variable $(u,\bar{u})$ where 
 \[
 u:=\frac{1}{\sqrt{2}}\big(\Lambda^{\frac{1}{2}}\psi+\ii\Lambda^{-\frac{1}{2}}v \big)\,,
 \qquad \Lambda:=\sqrt{-\partial_{xx}+\mathtt{m}}\,,
 \]
 for which equation \eqref{KG} reads
 \[
 \ii u_{t}=\Lambda u+\frac{1}{\sqrt{2}}\Lambda^{-\frac{1}{2}}\left(\frac{\Lambda^{-\frac{1}{2}}(u+\bar{u})}{\sqrt{2}}\right)^{2}\,.
 \]
  The above examples, together with many others,
  have been extensively studied in the last decades and related  questions around stability 
  and existence of periodic and quasi-periodic solutions have been widely investigated in many contexts and directions.
A guiding argument in this line of thoughts is that for long time scales
 the effect of the nonlinearity becomes non-trivial so that one expects that the dynamics is led by the resonant interactions of the linear frequencies of oscillations. 
 In order to investigate this phenomenon, normal form approaches borrowed from finite dimensional dynamical systems have been successfully extended to infinite dimension and extensively 
 used as an effective tool for proving long time (or almost global) existence of solutions, or existence of special global ones.
 
 It is beyond the purpose of the present paper to provide an overview of stability or \emph{quasi}-periodic KAM theory for PDEs, however,
 we refer for instance to  \cite{CW, Boj, KP, Po2} as the  first 
 works that paved the ground to this theory in infinite dimension, involving phase spaces of functions whose ``space" variable $x$ belongs to some manifold of dimension one.
 From these seminal works, many extensions in different directions raised, depending on the nature of the nonlinearity or the dimension of the space-manifold for example. Without trying to be exhaustive, we address the interested reader to
 \cite{Ku2, LY, BBiP2, KdVAut, FeoGiuPro2020, 
 BBHM, FeoGiu:memo, IonescuSQG}
for 1d equations containing derivatives in the nonlinearity, and to
\cite{B5, EK, EGK, PP3, GengYou2006, BCP}
 for the higher dimensional case, and to the references therein.
 
Regarding the study of almost-periodic solutions instead, only few
examples are known and all of them rely on $1$-space dimensional models depending on infinitely many \textit{external} parameters provided by the presence in the equation of an appropriate convolution or multiplicative potential that is necessary for tuning the infinitely many frequencies of oscillations and get the aforementioned arithmetic Diophantine conditions. However, even in this quite unnatural frame, the problem is hard to handle and the existence of  almost periodic solutions supported on \textit{full} dimensional tori, i.e. whenever $S = \Z$,  has been proved only in functional phase spaces of high regularity, like the analytic or the Gevrey one 
\cite{Bourgain:2005, Poschel:2002, GengXu2013, CongYuan:2020, BMP:AHP}.
However, at the best of our knowledge, a step forward in the direction of lowering the regularity is represented by the recent 
\cite{BMP:weak}.
The question of the existence of such solutions for a \textit{fixed} PDE remains open as well as the possibility of their construction in models with a finite number of natural parameters.
In the following we shall specify to this latter problem and  discuss  non-resonance conditions and the delicate matter of frequency modulation in this degenerate context. 

\smallskip

\vspace{0.5em}
More specifically, \emph{all} the mentioned
 results require very strong non-resonant conditions on the frequencies of oscillations 
 that go under the umbrella of the so-called Diophantine conditions, which now we are going to describe in details.
 More precisely one needs some lower bounds on quantities of the form
 \begin{equation}\label{enneinterazio}
 \omega_{j_{1}}\ell_1+\ldots+ \omega_{j_{n}}\ell_{n}
 \end{equation}
 for any fixed $n\in \N$.
 In the case of a finite number of frequencies those conditions are well understood and the techniques involved to handle them (at least in one dimensional problems) are inherited/adapted from finite dimensional context.
 Concerning an infinite number of frequencies the problem is harder and much less understood.
 
Such lower bounds are achievable only if we
are able to modulate the frequencies $\omega_{j}'$s, which makes somehow necessary to consider an additional linear term in the equation, e.g. a multiplicative or convolution potential.
Note that sometimes the equation may already depend on natural parameters, such as the 
  mass for the Klein-Gordon or Beam equation, or the capillarity for the water waves system for example.
  
  As one can expect, the more parameters one has at disposal the simpler is to impose non-resonance 
  conditions with suitable lower bounds on the quantity \eqref{enneinterazio}.
 
 In the present paper we focus on some aspects of Diophantine
 conditions for infinitely many frequencies $\omega=(\omega_{j})_{j\in\Z}$ 
 in the \emph{degenerate case} of the Klein-Gordon equation where
 \[
\omega_{j}:=\omega_{j}(\mathtt{m}):=\sqrt{j^{2}+\mathtt{m}}\,,\qquad j\in\Z\,,\;\mathtt{m}\in[1,2]\,.
 \]
 The term degenerate refers to the fact that only on parameter is at disposal for the frequencies modulation.
Before analysing the specific properties of the Klein-Gordon frequencies 
we briefly discuss how Diophantine conditions come into play.

 \vspace{0.5em}
 \paragraph{\bf The problem of small divisors.}
 
 As previously mentioned,
 a normal form approach inherited from finite-dimensional dynamical systems  has revealed to be an effective tool to tackle the questions above also in the infinite dimensional context. Consistently with the finite dimensional case, a key idea lies in ``successive linearizations" of the nonlinear problem that consist in straightening the nonlinear flow back to the linear one at the highest possible order, through appropriate diffeomorphisms. Note that, given a sequence of initial data 
 $(I_j)_{j\in\Z} := (|u_j(0)|^2)_{j\in\Z}$ in a chosen phase space $\cP$, 
 the linear flow $u_j(t) = u_j(0) e^{\ii \omega_j t}$ leaves invariant the subset 
 \begin{equation}
 \mathcal{T}_I := \set{u\in\mathcal{P}\, : \, |u_j|^2 = I_j\, \quad \forall j\in\Z}\,. 
 \end{equation}
 Of course, given a Hamiltonian $H$ as in \eqref{carla},
 there is no reason for
 the vector field $X_{P}$ to vanish on $\mathcal{T}_{I}$, therefore its persistence to the (small) nonlinear effects is subordinated to constructing a \emph{close-to-identity} change of coordinates $\Phi$ that conjugates $H $ to the normal form
 \[
 H\circ\Phi=\tD_{\omega}+N\qquad {\rm such\; that} \quad X_{N|_{\mathcal{T}_{I}}}\equiv0\,.
 \] 
where we set $\mathtt{D}_{\omega}$ to be the diagonal term $\mathtt{D}_{\omega} := \sum_j \omega_j|u_j|^2$.
On the other hand, one could aim at a weaker result and look for a diffeomorphism $\Phi$ such that
\[
 H\circ\Phi=\tD_{\omega}+Z+R 
\]
where $Z$ is a polynomial function of $|u_{j}|^{2}$ and $R\sim O(u^{K+2})$ for $K\gg1$ very large.
From that, although there is no invariant torus a priori, one is able to deduce that solutions evolving form data of size $\e$ remain
confined into a ball of radius $2\e$ for times $\sim \e^{-K}$.

\smallskip

 In both cases the map is iteratively constructed as the composition of time-one flows generated by some auxiliary Hamiltonian functions $F$'s. To fix the ideas, let us take a Hamiltonian $F$ and  denote its flow $\Phi_{F}^{\tau}$, $\tau\in[0,1]$.  By assuming that $P\sim O(\e)$ where $0<\e\ll1$ is a small parameter, the Taylor expansion of  
$(H\circ\Phi_{F}^{\tau})_{|\tau=1}$
at zero 
has the form
\[
(H\circ\Phi_{F}^{\tau})_{|\tau=1}=\tD_{\omega}+P+L_{\omega} F+h.o.t.
\]
where
$L_{\omega}$ denotes the Lie
derivative of $F$ along the flow of $\tD_{\omega}$, namely
\begin{equation}\label{def:elleomega}
L_{\omega} F:= \frac{d}{dt}_{|t=0} \Phi_{\mathtt{D}_\omega,t}^* F
\end{equation}
and 
where we denoted by \emph{h.o.t.} the  higher order terms which are at least quadratic in $F\sim O(\e)$.

 So, at each order/step, we can cancel out the terms that obstruct the conjugacy of $H$ to the desired normal form if 
we manage to solve the \textit{homological equation} 
\begin{equation}\label{homologic}
L_\omega F + P = 0\,
\end{equation}
  in some appropriate functional space. 
  \\ In our setting it is natural to consider Hamiltonians that are analytic in some ball around the origin, that is 
given $r>0$ we consider 
$ H : B_r(\cP) \to \R$
such that there exists a point wise  
absolutely convergent power series expansion\footnote{As usual given 
a vector $k\in \Z^\Z$, $|k|:=\sum_{j\in\Z}|k_j|$.}
\begin{equation*}
H(u)  = 
\sum_{\substack{
\al,\bt\in\N^\Z\,, \\2\leq |\al|+|\bt|<\infty} }
\!\!\!
H_{\al,\bt}u^\al \bar u^\bt\,,
\qquad
u^\al:=\prod_{j\in\Z}u_j^{\al_j}\,.
\end{equation*}
  
 Therefore, under the above assumptions, the homological equation yields

\begin{equation}\label{def:adjaction}
L_{\omega} F
= 
\sum_{\substack{
\al,\bt\in\N^\Z\,, \\2\leq |\al|+|\bt|<\infty} } 
-{\rm i}\big(\omega \cdot(\al-\bt)\big)F_{\al,\bt}u^{\al}\bar{u}^{\bt} = - \sum_{\substack{
\al,\bt\in\N^\Z\,, \\2\leq |\al|+|\bt|<\infty} } P_{\al,\bt} u^\al u^\bt\,.
\end{equation}

By identification of coefficients,  it becomes now evident that at least at a formal level the solution $F$ is given by 
$$
F_{\al,\bt} = \frac{P_{\al,\bt}}{{\rm i}\big(\omega \cdot(\al-\bt)\big)}
$$
for any $\al,\bt$ such that $\omega \cdot(\al-\bt) \not\equiv 0$, that is equation \eqref{homologic} is satisfied modulo the kernel of $L_\omega$. \\
In making rigorous the reasoning above we have to deal with  some crucial issues:

\noindent
$\bullet$ the part of $P$ that belongs to the kernel of $L_\omega$, which cannot be eliminated,
contributes to the normal form. Therefore, the description of these terms is fundamental to understand in which way they affect the dynamics;

\noindent
$\bullet$ for proving the convergence of $F$ in the chosen functional space, one needs 
 quantitative lower bounds on the small divisors $|\omega\cdot(\al - \bt)|$
 for any  $\ell=\alpha-\beta$ in $\Z^\Z$ with finite support that belong to the subset
$\Lambda\subseteq\Z^\Z$ of \emph{non-resonant} vectors defined as
\begin{equation}\label{nonresSet}
\Lambda:=\{ \ell\in\Z^\Z\,:\, \omega\cdot\ell\not\equiv0\,,\quad  0<|\ell|<+\infty\}\,;
\end{equation}

\noindent
$\bullet$ 
 for the result to be meaningful, those bounds must be satisfied 
by a positive measure set of frequencies. Therefore, measuring the set of ``good frequencies" becomes a key point.

\smallskip
In the finite dimensional case, i.e. if $\ell\in\Z^d$, $d\geq1$, a classical assumption is to require that  the frequency vector 
$\omega\in\R^d$  is a \emph{dipohantine} vector, i.e. 
there exist $\gamma,\tau>0$ such that 
\begin{equation}\label{diopfinita}
|\omega\cdot\ell|\geq \frac{\gamma}{|\ell|^{\tau}}\,,\quad \forall\, \ell\in\Z^{d}\setminus\{0\}\,.
\end{equation}
It is well known that, if $\tau>d-1$, then  the set of vectors satisfying \eqref{diopfinita} tends to the full measure as $\gamma$ tends to $0$.
Because of this dependence on the dimension $d$ in the diophantine exponent $\tau$ the above condition cannot be extended naively to infinite dimensional $\omega$'s . 
However, under strong assumptions on the asymptotics 
of $\omega_j$, one can impose similar lower bounds on a large measure set of 
$\omega \in\R^\Z$, see \cite{ChierP} for instance.

On the other hand,
the following 
Diophantine-type condition is \emph{uniform} in the dimension of the 
support of $\ell\in \Z^\Z$:
\begin{equation}\label{diofantina}
\mathtt{D}_{\gamma}:=
\Big\{ \omega\in \R^{\Z} : |\omega\cdot\ell|\geq\prod_{n\in\mathbb{Z}} 
\frac{\gamma}{(1+|\ell_{n}|^{2}\langle n\rangle^{2})^{\tau}}\,,\; \forall \ell\in\Lambda: 0 < |\ell| < \infty \Big\}\,,
\end{equation}
where  $\langle n\rangle:=\max\{1,|n|\}$ for any $n\in \mathbb{Z}$, $\gamma, \tau>0$,  
and $\Lambda$ is a suitable \emph{non-resonant} sub-lattice of $\Z^{\Z}$
which, in the applications, depends on the frequencies $\omega$. 
We strongly underline that the above condition, w.r.t. the classical one, 
is tailored for a truly infinite dimensional problem, allowing to construct directly 
an infinite dimensional invariant object avoiding finite dimensional approximations
as it was done in \cite{Poschel:2002, GengXu2013}.

Bourgain introduced for the first time the above condition in \cite{Bourgain:2005} for constructing the first result on almost-periodic solutions for the quintic NLS with convolution potential $V\ast u$. The presence of such potential provides as many parameters as the number of linear frequencies $j^2$ involved, so that  the sub-lattice $\Lambda \equiv \Z^\Z\setminus \set{0}$ and infinitely many $(V_j)_{j\in\Z} \subset [-1/4,1/4]^\Z$ are at disposal for making the set of 
$\omega_j = j^2 + V_j$ in \eqref{diofantina} of measure $1 - O(\gamma)$ (w.r.t. the product probability measure inherited from $[-1/4,1/4]^\Z$ through the map $\omega_j \mapsto \omega_j - j^2 \in [-1/2,1/2]$). \\ 
So far, all the results on the existence of almost-periodic solutions concern PDEs involving infinitely many external parameters, either as Fourier's multiplier like in the convolution potential case, see \cite{CongYuan:2020, BMP:AHP}, 
or as the spectrum of a multiplicative one \cite{Poschel:2002}. 
Whether it is possible to construct such solutions in the case of a fixed (non integrable) 
PDE (i.e. when $V = 0$), which would require frequencies' modulation by moving initial data or 
or a \textit{finite} number of \textit{natural} parameters, is one of the major open questions in the field.

\medskip
So, an intermediate yet fundamental problem becomes whether 
one can use a number of parameters strictly less than the number of frequencies $\omega_{j}$'s for fulfilling the diophantine
 conditions required in the construction of quasi/almost periodic solutions.
 So far, only the quasi-periodic case has been successfully tackled, 
by means of the so called \emph{degenerate KAM theory}, we refer the reader to \cite{russmann2001, BambuBertiMagi:2011} for example.
We remark that this degenerate case naturally arises from several physical models in which the linear frequencies depend on some \emph{internal} physical parameter.
To be more concrete we mention the Beam , the Klein-Gordon and the gravity-capillary Water Waves equations
whose dispersion relations are respectively given by 
\begin{itemize}
\item \emph{Beam}: \;\;$\displaystyle{\Z\ni j\mapsto \omega_{j}:=\sqrt{|j|^{4}+\mathtt{m}}}$;

\item \emph{Klein-Gordon}: \;\;$\displaystyle{\Z\ni j\mapsto \omega_{j}:=\sqrt{|j|^{2}+\mathtt{m}}}$;

\item \emph{Gravity-capillary Water Waves}: \;\;$\displaystyle{\Z\ni j\mapsto \omega_{j}:=\sqrt{\kappa|j|^{3}+g|j|}}$,
\end{itemize}
where $\mathtt{m}>0$ is the \emph{mass} while $\kappa,g>0$ are respectively the capillarity of the fluid and the gravity.

A common point in these examples is that  there are two main obstruction 
in proving that the set $\tD_{\gamma} $ in \eqref{diofantina}
has large measure when $\tau >0$ is a pure number and $\Lambda=\Z^{\Z}\setminus\{0\}$:

\smallskip
\noindent
$(i)$ because of the parity of the dispersion law the set of resonant $\ell=\alpha-\beta$, $(\alpha,\beta)\in\N^{\Z}\times\N^{\Z}$
is not reduced only to $\alpha\equiv\beta$.
Indeed one can show that the \emph{resonant} subset $\Lambda^{c}$ necessarily contains the set
\begin{equation}\label{def:resonantset}
\mathtt{R}:=\left\{
\begin{aligned}
\ell=\alpha-\beta\,,&\;(\alpha,\beta)\in\N^{\Z}\times\N^{\Z} \;:\; 0<|\alpha|+|\beta|<+\infty\,,
\\&
\sum_{j\in\Z}j\alpha_{j}-j\beta_{j}=0\,,\quad
\alpha_{j}=\beta_{j} \;\lor\; \alpha_{j}=\beta_{-j}\; \forall\; j\in\Z
\end{aligned}
\right\}\,.
\end{equation}
Actually one is able to prove (this will be discussed in details later in the Klein-Gordon example) that indeed
the resonant set is exactly  $\mathtt{R}$.

In the favourable case of the NLS with convolution potential (for instance), 
where the dispersion relation is 
\begin{equation}\label{freqNLS}
\Z\ni j \to \omega_j := j^2 + V_j\quad (V_j)_{j\in\Z}\subset\ell^{\infty}
\end{equation}
one  can employ a potential satisfying $V_{j}\neq V_{-j}$ in order to prove that the resonant set $\mathtt{R}$
reduces to $\alpha\equiv\beta$.

\smallskip

\smallskip
\noindent
$(ii)$ Concerning the measure of the corresponding set $\mathtt{D}_{\gamma}$
it seems, at the moment, out of reach to obtain positive measure for an exponent $\tau>0$
independent of the \emph{support} of $\ell$. Indeed, due to the degenerate setting, it is not possible to 
estimate the sub-levels of $\omega\cdot\ell$ just providing lower bounds on its first derivative\footnote{This is actually what can be done in the case of \eqref{freqNLS} where many parameters are at disposal.}.
On the contrary one needs different results, 
involving a certain (large) number of derivatives, which provides slightly worst measure estimates
on sub-levels of $C^k$-functions. In addition to that such estimates depend on the support of $\ell$.
We refer to \cite{Eliasson:cetraro} and to section \ref{sec:debole} of the present paper.

\vspace{0.5em}
\noindent
{\bf Main result.} We are now in position to state our main result.
Let us consider 
\begin{equation}\label{dispLawWave}
\begin{aligned}
\omega&:=\omega(\mathtt{m}):=(\omega_{j})_{j\in \mathbb{Z}}\in \mathbb{R}^{\mathbb{Z}}\,,
\\
\omega_{j}&:=\omega_{j}(\mathtt{m}):
=\sqrt{|j|^{2}+\mathtt{m}}\,,\qquad j\in \mathbb{Z}\,,\qquad \mathtt{m}\in[1,2]\,,
\end{aligned}
\end{equation}
%
%
%
%
and  the set of non-resonant indexes 
\begin{equation}\label{restrizioni indici}
\Lambda:=\big\{\ell\in \mathbb{Z}^{\mathbb{Z}}\; : \; 
\ell:=\alpha-\beta\,,\; \forall (\alpha,\beta)\in \mathtt{R}^{c}\big\}\,,
\end{equation}
where $\mathtt{R}$ is in \eqref{def:resonantset}.
Moreover, given a vector $\ell:=(\ell_i)_{i\in \mathbb{Z}}\in \Lambda$
consider the set
\begin{equation}\label{insiemeAA}
\mathcal{A}(\ell):=\{i\in \mathbb{Z} \,:\, \ell_i\neq0\}
\end{equation}
 and
define the map
\begin{equation}\label{polloarrosto}
\ell\mapsto \mathtt{d}:=\mathtt{d}(\ell)\in \mathbb{N}
\end{equation}
where $\mathtt{d}(\ell):=\#\mathcal{A}(\ell)$. We call $\mathtt{d}(\ell)$ the \emph{support} of $\ell$,
i.e. the number of components of $\ell$ which are different form zero.
We need the following definition.
\begin{definition}
Consider a vector $v=\pa{v_i}_{i\in \Z}$,  $v_i\in \N$, $|v|<\infty$. 
We define $m=m(v)=(m_{j})_{j\in\mathbb{N}}$ as the reordering of the elements of the set
\[
\set{j\neq 0 \,,\quad \mbox{repeated}\quad  \abs{v_j} \;\mbox{times}}\,,
\]
where $D<\infty$ is its cardinality, such that
$|m_1|\ge |m_2|\ge \dots\geq |m_D|\ge 1$. 	
\end{definition}

\begin{theorem}{\bf (Measure estimates for the Klein-Gordon).}\label{thm:mainVERO}
There exists a positive measure set $\mathfrak{Q}\subseteq [1,2]$
such that
for any $\mathtt{m}\in\mathfrak{Q}$, 
 the vector $\omega(\mathtt{m})$ 
satisfies the following: for any 
$\ell=\alpha-\beta$, $(\alpha,\beta)\in \Lambda$ 
one has 
\begin{equation}\label{goodsmalldivTeorema}
|\omega\cdot\ell|\geq  \gamma^{{\tau^2(\ell)}}
\frac{1}{(\mathtt{C}(|\alpha|+|\beta|))^{6\tau(\ell)}}
\prod_{\substack{n\in\mathbb{Z} \\ n\neq m_1(\ell),m_2(\ell)}} 
\frac{1}{(1+|\ell_{n}|^{2}\langle n\rangle^{2})^{{4\tau^2(\ell)}}}\,,
\end{equation}
where $\tau(\ell):=\td(\ell)(\td(\ell)+4)$, 
and $\mathtt{C}$ 
is a 
positive pure constant  large enough.
Moreover,  there exists a positive constant $\mathtt{c}$ such that
\[
\meas([1,2]\setminus \mathfrak{Q})\leq \mathtt{c}\gamma\,.
\]
\end{theorem}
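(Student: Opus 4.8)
The plan is to bound, for each non‑resonant $\ell$, the measure of the set of masses violating \eqref{goodsmalldivTeorema}, and then to sum over $\Lambda$. I first record that, in view of \eqref{def:adjaction}, only indices with $\sum_{j}j\ell_j=0$ are relevant (momentum conservation), and for those $\omega(\mathtt m)\cdot\ell=\sum_{j}\big(\sqrt{j^{2}+\mathtt m}-|j|\big)\ell_j$, a sum of terms of size $O(|\ell_j|/|j|)$; this is also the class in which $\mathtt R$ in \eqref{def:resonantset} is exactly the resonant set. Fix such an $\ell=\alpha-\beta\in\Lambda$ and set $\td=\td(\ell)$, $\tau=\tau(\ell)=\td(\td+4)$. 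Since $\omega_j=\omega_{-j}$, the number $g(\mathtt m):=\omega(\mathtt m)\cdot\ell$ depends on $\ell$ only through the \emph{folded} coefficients $a_r:=\ell_{k_r}+\ell_{-k_r}$ (if $k_r\ge1$) resp.\ $a_r:=\ell_0$ (if $k_r=0$), where $0\le k_\nu<\dots<k_1$ are the distinct moduli occurring in $\mathcal A(\ell)$, so that $g(\mathtt m)=\sum_{r=1}^{\nu}a_r\sqrt{k_r^{2}+\mathtt m}$ with $1\le\nu\le\td$. Non‑resonance forces $(a_r)_r\ne0$; pick $r_0$ with $|a_{r_0}|\ge1$. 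If $\nu=1$ then $|g(\mathtt m)|=|a_1|\sqrt{k_1^{2}+\mathtt m}\ge1$ and \eqref{goodsmalldivTeorema} is trivial (its right‑hand side is $\le1$); so assume $\nu\ge2$.

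\textbf{Quantitative transversality.} One has $\partial_\mathtt m^{\,i}\sqrt{k^{2}+\mathtt m}=c_i(k^{2}+\mathtt m)^{\frac12-i}$ with $c_0=1$, $c_i\ne0$. Hence the matrix $W(\mathtt m):=\big(\partial_\mathtt m^{\,i}\sqrt{k_r^{2}+\mathtt m}\big)_{0\le i\le\nu-1,\,1\le r\le\nu}$ equals, up to the nonzero factors $c_i$ and the positive column factors $\sqrt{k_r^{2}+\mathtt m}$, the Vandermonde matrix in the pairwise distinct variables $y_r:=(k_r^{2}+\mathtt m)^{-1}$, so that
\[
|\det W(\mathtt m)|=\Big|\prod_{i=0}^{\nu-1}c_i\Big|\Big(\prod_{r=1}^{\nu}\sqrt{k_r^{2}+\mathtt m}\,\Big)\prod_{1\le r<s\le\nu}\Big|\tfrac{1}{k_s^{2}+\mathtt m}-\tfrac{1}{k_r^{2}+\mathtt m}\Big|\ \ge\ \frac{c(\nu)}{\prod_{r}\langle k_r\rangle^{\,p(\nu)}},
\]
uniformly for $\mathtt m\in[1,2]$, with $p(\nu)=O(\nu^{2})$ (using $|k_s^{2}-k_r^{2}|\ge1$, $k_r^{2}+\mathtt m\le4\langle k_r\rangle^{2}$). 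Since $\big(\partial_\mathtt m^{\,i}g(\mathtt m)\big)_{0\le i\le\nu-1}=W(\mathtt m)(a_r)_r$ and $|a_{r_0}|\ge1$, Cramer's rule plus a crude adjugate bound gives
\[
\min_{\mathtt m\in[1,2]}\ \max_{0\le i\le\nu-1}\big|\partial_\mathtt m^{\,i}g(\mathtt m)\big|\ \ge\ \delta(\ell):=\frac{c'(\nu)}{\prod_{r}\langle k_r\rangle^{\,q(\nu)}},\qquad q(\nu)=O(\nu^{2})\le C\,\tau(\ell).
\]

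\textbf{Sublevel estimate and summation.} By the Rüssmann‑type sublevel lemma for functions with a non‑degenerate jet (cf.\ \cite{russmann2001, Eliasson:cetraro, BambuBertiMagi:2011}): if $h\in C^{\nu-1}([1,2])$ satisfies $\min_\mathtt m\max_{0\le i\le\nu-1}|h^{(i)}(\mathtt m)|\ge\delta>0$, then $\meas\{|h|<\kappa\}\le C(\nu)(\kappa/\delta)^{1/\nu}$ (the $C^{\nu-1}$‑norm of $h$, finite since $g$ is analytic in $\mathtt m$, cancels after rescaling). Applying it to $h=g$, $\delta=\delta(\ell)$, with $\kappa=\kappa(\ell)$ the right‑hand side of \eqref{goodsmalldivTeorema} evaluated at the minimal admissible value $|\alpha|+|\beta|=|\ell|$ — which, by monotonicity in $|\alpha|+|\beta|$, dominates the bad set of every pair with that difference — and setting $\mathrm{Bad}(\ell):=\{\mathtt m:|g(\mathtt m)|<\kappa(\ell)\}$, $\mathfrak{Q}:=[1,2]\setminus\bigcup_{\ell\in\Lambda}\mathrm{Bad}(\ell)$, the bound \eqref{goodsmalldivTeorema} holds on $\mathfrak{Q}$ by construction, and, using $\nu\le\td$ and absorbing the loss $\delta(\ell)^{-1/\nu}\le C(\td)\prod_r\langle k_r\rangle^{O(\td)}$ into the (much larger) exponents $4\tau^{2}/\nu,\,6\tau/\nu$ produced by the $1/\nu$‑th root — this is exactly where the quadratic growth $\tau(\ell)=\td(\td+4)$ is needed —
\[
\meas\big(\mathrm{Bad}(\ell)\big)\ \le\ C(\td)\,\gamma^{\,\tau^{2}/\td}\,(\mathtt C|\ell|)^{-6\tau/\td}\!\!\prod_{n\ne m_1(\ell),m_2(\ell)}\!\!\big(1+|\ell_n|^{2}\langle n\rangle^{2}\big)^{-2\tau^{2}/\td}.
\]
One then sums over $\ell\in\Lambda$ grouped by $d:=\td(\ell)\ge2$: the weights make the sums over the coefficients $(\ell_n)_{n\ne m_1,m_2}$ and over the support $\mathcal A(\ell)\setminus\{m_1,m_2\}$ converge to a constant depending only on $d$; the two extremal sites $m_1,m_2$ — whose weights are absent in \eqref{goodsmalldivTeorema} because they contribute only $O(1/|m_i|)$ to $g$ — are handled via momentum conservation $\sum_j j\ell_j=0$, which ties $\langle m_1\rangle$ to the remaining data, together with the factor $(\mathtt C|\ell|)^{-6\tau/\td}$ ($6\tau/\td>2$), so that only finitely many choices of $(\ell_{m_1},\ell_{m_2},m_2)$ yield a nonempty bad set. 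Finally, summing over $d\ge2$, the prefactor $\gamma^{\tau^{2}/\td}=\gamma^{\,\td(\td+4)^{2}}\le\gamma^{\,72}$ beats the $d$‑dependent constants, giving $\sum_{\ell\in\Lambda}\meas(\mathrm{Bad}(\ell))\le\mathtt c\,\gamma$.

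\textbf{Main obstacle.} The crux is Step~2 together with its compatibility with Step~3: one must produce a lower bound $\delta(\ell)$ on the full $\nu$‑jet of $g$ that is only \emph{polynomially} small in the support, with exponent $O(\td^{2})$, so that after the unavoidable $1/\nu$‑th root in the Rüssmann lemma the $O(\td)$ loss is reabsorbed by the quadratic Diophantine exponent $\tau(\ell)$ while enough decay in the lattice variables survives to sum over the infinite set $\Lambda$; the treatment of the two extremal sites through momentum conservation is the other delicate ingredient. Both difficulties stem from the asymptotic linearity $\omega_j\sim|j|$, which simultaneously forces the resonant set to be the whole of $\mathtt R$ (handled by the folding above) and makes the high derivatives $\partial_\mathtt m^{\,i}\omega_j\sim|j|^{1-2i}$ tiny — the source of the $\langle k_r\rangle^{-O(\td^{2})}$ smallness of $\delta(\ell)$ — effects that are entirely absent in the convolution‑potential setting \eqref{freqNLS}.
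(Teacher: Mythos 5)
Your proposal correctly identifies the Vandermonde/R\"ussmann mechanism (which is indeed the paper's Lemma~\ref{lem:vander} together with Lemma~\ref{v.112}), and the folding of $\ell$ onto distinct moduli via $\omega_j=\omega_{-j}$ is exactly the reduction performed at the start of the paper's proof of Lemma~\ref{lem:vander}. However, there is a genuine gap in the way you try to obtain the \emph{improved} bound \eqref{goodsmalldivTeorema} — the one that drops the factors at the two extremal sites $m_1,m_2$ — directly from a single application of the R\"ussmann sublevel estimate.

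Concretely: the R\"ussmann-type lemma gives $\meas\{\mathtt m\in[1,2]:|g(\mathtt m)|<\kappa\}\lesssim(\kappa/\delta)^{1/\nu}$, where your $\delta(\ell)$, coming from the Vandermonde determinant, is of size $\prod_{r}\langle k_r\rangle^{-O(\td^{2})}$ and this product \emph{includes} $\langle m_1\rangle$ and $\langle m_2\rangle$. Since your $\kappa(\ell)$ (the right-hand side of \eqref{goodsmalldivTeorema}) carries \emph{no} decay in $\langle m_1\rangle,\langle m_2\rangle$, the ratio $(\kappa/\delta)^{1/\nu}$ contains an uncontrolled factor $\langle m_1\rangle^{O(\td)}\langle m_2\rangle^{O(\td)}$, so the resulting bound on $\meas(\mathrm{Bad}(\ell))$ grows (rather than decays) in the extremal sites. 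Your attempt to repair this by invoking momentum conservation and the prefactor $(\mathtt C|\ell|)^{-6\tau/\td}$ does not close the hole: momentum conservation determines $m_1$ from $(m_2,\ell_{m_1},\ell_{m_2})$ and the remaining data, but $m_2$ still ranges over infinitely many values, and the $|\ell|^{-6\tau/\td}$ factor decays in $|\ell|$, not in $m_2$; the claim that ``only finitely many choices of $(\ell_{m_1},\ell_{m_2},m_2)$ yield a nonempty bad set'' is not established and in fact is not what makes the paper's argument work.

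The paper circumvents exactly this problem by a two-step argument that your proposal misses. In Step 1 (Proposition~\ref{thm:main}) the R\"ussmann machinery is applied to the \emph{weaker} target $\kappa'(\ell)=\gamma^{\td}\prod_{n\in\Z}(1+|\ell_n|^2\langle n\rangle^2)^{-\tau}$, which does decay in \emph{all} indices including $m_1,m_2$; this makes $\kappa'/\delta$ small and the bad-set measures summable. Crucially, that proposition bounds $|\omega\cdot\ell+p|$ for \emph{every integer shift} $p$, a feature absent from your proposal. Step 2 (Proposition~\ref{prop:stimeimproved}) is then a purely deterministic implication, valid for every $\mathtt m$ in the surviving set: after splitting according to $\sigma_1\sigma_2=\pm1$, the case $\sigma_1\sigma_2=1$ forces $|m_1|,|m_2|\lesssim N\langle m_3\rangle$ so their weights can be traded for powers of $N\langle m_3\rangle$; and in the case $\sigma_1\sigma_2=-1$ one writes $\omega\cdot\ell=\omega\cdot\wt\ell+(|m_1|-|m_2|)+\mathtt r_{m_1}-\mathtt r_{m_2}$ and invokes the \emph{shifted} diophantine estimate with $p=|m_1|-|m_2|\in\Z$, which is precisely why Step 1 was stated for all integer $p$: the corrections $\mathtt r_{m_i}=O(1/|m_i|)$ are then negligible once $|m_1|$ exceeds the explicit threshold \eqref{tagliom1}. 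No additional measure needs to be excised in Step 2, so the summability issue never arises. Without this decomposition — i.e., if you insist on applying R\"ussmann directly to the $m_1,m_2$-free right-hand side — the measure estimate does not sum over $\Lambda$, and the proof as written does not go through.
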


Some comments are in order.

\begin{enumerate}
\item As already explained the exponents in the r.h.s. of  \eqref{goodsmalldivTeorema}
depends on $\td(\ell)$, differently from \eqref{diofantina}. This dependence, which seems to us to be unavoidable,
is the major obstruction to the possibility of constructing \emph{almost}-periodic solutions
following the ideas in \cite{BMP:AHP}.

\item Differently from \eqref{diofantina}, the bound \eqref{goodsmalldivTeorema} 
does not depend on the highest indexes $m_1(\ell),m_2(\ell)$.
Note that in the case of super-linear dispersion relations (such as NLS or Beam where $\omega_{j}\sim j^2$)
the estimates 
\eqref{diofantina} can be improved similarly. An idea like this has 
been implemented for instance in \cite{BMP:AHP} and 
\cite{FeoMass:Beam}. The asymptotically linear dispersion law of the Klein-Gordon make this step non-trivial.
See section \ref{sec:improved} for the technical details.

\item The improvement in the  lower bound \eqref{goodsmalldivTeorema} has been achieved also 
in \cite{CongYuan:2020}, in the case of the Klein-Gordon equation with a convolution potential. 
We obtain a similar result with just one parameter modulating the frequencies,
at the price of the presence of $\tau(\ell)$ (depending on $\ell$) in the exponent.
\end{enumerate}

The proof of the Theorem above consists in two steps.

We start by proving that  a first order Melnikov condition holds with a weaker lower bound involving also the first two highest indexes 
$m_{1}(\ell),m_2(\ell)$. This is the content of Proposition \ref{thm:main}.
Then, in subsection \ref{sec:improved},
we improve such lower bound exploiting the asymptotic, as $j\to\infty$, of the frequencies $\omega_{j}$.
This step strongly relies on the one dimensional setting.

Although a KAM type result for almost periodic solutions seems to be out of reach,   
 as an application 
 of Theorem \ref{thm:mainVERO} we show how to use the above estimates to solve a 
 homological equation 
\eqref{def:adjaction} in Gevrey category. More precisely in Theorem \ref{shulalemma} we are able to provide
quite sharp estimates on $L_{\omega}^{-1}P$ where $P$ is a homogeneous polynomial of any degree
$\mathtt{N}+2$. Our estimates depend on $\mathtt{N}$, and  tend to infinity as $\mathtt{N}\to \infty$,
due to the presence of $\tau(\ell)$ and because the support $\td(\ell)\sim \mathtt{N}^{2}$. 
This prevents the implementation of a KAM scheme as in \cite{BMP:AHP}. 
However, a long time stability result is likely
to be achieved, in line with \cite{FeoMass:Beam}.

We conclude this introduction by giving some key ideas of the proof of Theorem \ref{shulalemma}.

In section \ref{sec:classe} we introduce Gevrey-type norms on Hamiltonians.
It turns out, exploiting that formalism, that  the norms of the solution $L_{\omega}^{-1}P$
is subordinated to bound form above the quantity, $\theta\in(0,1)$,
\[
J_0 =  
\sup_{\substack{j\in\Z,\,  (\al,\bt)\in\Lambda \\ 
\al_j+\bt_j\neq 0  \\
 |\al-\bt|\leq \tN+2}}
\frac{e^{-\s\pa{\sum_i\jap{i}^{\theta} (\al_i+\bt_i) -2\jap{j}^{\theta}}}}{\abs{\omega\cdot{\pa{\al - \bt}}}}\,.
\]
Here $\s>0$ denotes the usual ``loss'' of regularity of $P$. 
The crucial point is that the exponent is negative, and in particular  
\[
\sum_i\jap{i}^{\theta} (\al_i+\bt_i) -2\jap{j}^{\theta}\geq {\sum_{i\neq m_1,m_2}\langle i\rangle^{\theta}|\alpha_i-\beta_i|} \,,
\]
see Lemma  \ref{lem:constance2SE}.
This shows why it is fundamental to have a lower bound in \eqref{goodsmalldivTeorema} 
independent of the highest indexes 
$m_{1}(\ell), m_{2}(\ell)$. We remark that, thanks to the superlinear dispersion, 
in \cite{BMP:AHP} and \cite{FeoMass:Beam} the authors proves 
an improved version of our Lemma \ref{lem:constance2SE} showing that
\[
\sum_{i\in\Z}\jap{i}^{\theta} (\al_i+\bt_i) -2\jap{j}^{\theta}\geq {\sum_{i\in\Z}\langle i\rangle^{\theta}|\alpha_i-\beta_i|} \,.
\]
Of course this allows to impose a weaker diophantine condition  like \eqref{diofantina}.

\section{Small divisors}\label{sec:smalldiv}
Here we show some type of  lower bounds 
that one can impose 
on $\omega\cdot\ell$, $\ell\in \Z^{\Z}$, where $\omega$ is in \eqref{dispLawWave}.

\subsection{A weak-diophantine condition for the Klein-Gordon}\label{sec:debole}
In this section we prove the following proposition.

\begin{proposition}\label{thm:main}
There exists a positive measure set $\mathfrak{Q}\subseteq [1,2]$
such that
for any $\mathtt{m}\in\mathfrak{Q}$, the vector $\omega(\mathtt{m})$ defined in \eqref{dispLawWave}
belongs to the diophantine set of frequencies 
\begin{equation}\label{diofSetPRIMA}
{\mathtt{P}}_{\gamma}:=
\Big\{ \omega\in \R^{\Z} : |\omega\cdot\ell+p|\geq\gamma^{\mathtt{d}(\ell)}\prod_{n\in\mathbb{Z}} 
\frac{1}{(1+|\ell_{n}|^{2}\langle n\rangle^{2})^{{\tau}}}\,,\; \tau:=\mathtt{d}(\ell)(\mathtt{d}(\ell)+4)\,,\;\;
\forall\ell\in \Lambda\,,\; p\in\Z \Big\}\,,
\end{equation}
Moreover,  there exists a positive constant $\mathtt{C}$ such that
\[
\meas([1,2]\setminus \mathfrak{Q})\leq \mathtt{C}\gamma\,.
\]
\end{proposition}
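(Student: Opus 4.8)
Fix $\ell\in\Lambda$ and $p\in\Z$ and regard $\mathtt{m}\mapsto f_{\ell,p}(\mathtt{m}):=\omega(\mathtt{m})\cdot\ell+p$ as a smooth function on $[1,2]$. The strategy is: (i) prove a \emph{quantitative transversality} estimate for $f_{\ell,p}$, with constants explicit in the support of $\ell$; (ii) feed it into a Rüssmann-type sub-level set estimate bounding $\meas\{\mathtt{m}:|f_{\ell,p}(\mathtt{m})|<\rho(\ell)\}$, where $\rho(\ell)$ is the right-hand side of \eqref{diofSetPRIMA}; (iii) sum over $\ell\in\Lambda$ and $p\in\Z$. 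The one analytic input is that, since $\omega_j(\mathtt{m})=\sqrt{j^2+\mathtt{m}}$ depends only on $|j|$, putting $\tilde\ell_0:=\ell_0$ and $\tilde\ell_n:=\ell_n+\ell_{-n}$ for $n\ge1$ we have $\omega(\mathtt{m})\cdot\ell=\sum_{n\ge0}\tilde\ell_n\sqrt{n^2+\mathtt{m}}$ and, for $k\ge1$,
\[
\partial_{\mathtt{m}}^k\big(\omega(\mathtt{m})\cdot\ell\big)=c_k\sum_{n\ge0}\tilde\ell_n\,(n^2+\mathtt{m})^{\frac12-k}\,,\qquad c_k:=\prod_{i=0}^{k-1}\Big(\tfrac12-i\Big)\,,\qquad\min_{k\ge1}|c_k|=|c_2|=\tfrac14\,,
\]
in particular independent of $p$. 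Since $\ell\in\Lambda$ one has $\omega(\mathtt{m})\cdot\ell\not\equiv0$ (the resonant set being exactly $\mathtt R$), so $(\tilde\ell_n)$ is a not-identically-zero integer sequence; let $n_1>\dots>n_s\ge0$, $1\le s\le\mathtt{d}(\ell)$, be the indices with $\tilde\ell_{n_r}\neq0$ (so $|\tilde\ell_{n_r}|\ge1$).

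\textbf{Transversality (the main step).} Consider the $s\times s$ matrix $M(\mathtt{m}):=\big((n_r^2+\mathtt{m})^{\frac12-k}\big)_{1\le k,r\le s}$. Writing $(n_r^2+\mathtt{m})^{\frac12-k}=(n_r^2+\mathtt{m})^{-\frac12}\,y_r^{\,k-1}$ with $y_r:=(n_r^2+\mathtt{m})^{-1}$ exhibits $M(\mathtt{m})$ as a Vandermonde matrix in the \emph{pairwise distinct} nodes $y_r$, up to a column scaling; using $|y_r-y_{r'}|=\dfrac{|n_r^2-n_{r'}^2|}{(n_r^2+\mathtt{m})(n_{r'}^2+\mathtt{m})}\ge\dfrac{1}{(n_r^2+\mathtt{m})(n_{r'}^2+\mathtt{m})}$ for $r\neq r'$ — the one and only place where one-dimensionality enters, through $|n_r^2-n_{r'}^2|\ge1$ — gives the bound, uniform on $[1,2]$,
\[
|\det M(\mathtt{m})|=\prod_{r=1}^{s}(n_r^2+\mathtt{m})^{-\frac12}\!\!\!\prod_{1\le r<r'\le s}\!\!\!|y_r-y_{r'}|\;\ge\;\prod_{r=1}^{s}(n_r^2+\mathtt{m})^{-(s-\frac12)}\,,
\]
while the cofactors of $M(\mathtt{m})$ are bounded by $(s-1)!$ (entries lie in $[-1,1]$). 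From $\big(\partial_{\mathtt{m}}^k(\omega\cdot\ell)/c_k\big)_{k=1}^{s}=M(\mathtt{m})\,(\tilde\ell_{n_r})_{r=1}^{s}$, $\|(\tilde\ell_{n_r})\|_\infty\ge1$, and $\min_k|c_k|=\tfrac14$, inverting $M(\mathtt{m})$ yields, for every $\mathtt{m}\in[1,2]$,
\[
\max_{1\le k\le s}\big|\partial_{\mathtt{m}}^k\big(\omega(\mathtt{m})\cdot\ell\big)\big|\;\ge\;\frac{|\det M|}{4\,s!}\;\ge\;\delta(\ell):=\frac{1}{\mathtt{c}_{\mathtt{d}(\ell)}}\prod_{n\in\Z}\big(1+|\ell_n|^2\langle n\rangle^2\big)^{-\mathtt{d}(\ell)}\,,
\]
where in the last step one uses $s\le\mathtt{d}(\ell)$, $n_r^2+\mathtt{m}\le3\langle n_r\rangle^2$, $\langle n_r\rangle^2\le1+|\ell_m|^2\langle m\rangle^2$ (for the sign $m\in\{\pm n_r\}$ with $\ell_m\neq0$; these $m$'s are distinct), and $\mathtt{c}_{\mathtt{d}}$ is a pure constant of size at most $C^{\mathtt{d}^2}\mathtt{d}!$.

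\textbf{Sub-level set estimate and summation.} By the quantitative sub-level set lemma of Rüssmann (see \cite{russmann2001,BambuBertiMagi:2011,Eliasson:cetraro}): if $g\in C^{s}([1,2])$ with $\max_{1\le k\le s}|g^{(k)}(\mathtt{m})|\ge\delta$ for all $\mathtt{m}$, then $\meas\{|g|<\rho\}\le C(s)(\rho/\delta)^{1/s}$, with $C(s)$ at most singly exponential in $s$. Applying this to $g=f_{\ell,p}$ (whose $k$-th derivatives, $k\ge1$, equal those of $\omega\cdot\ell$), with $\delta=\delta(\ell)$ and $\rho=\rho(\ell):=\gamma^{\mathtt{d}(\ell)}\prod_n(1+|\ell_n|^2\langle n\rangle^2)^{-\tau(\ell)}$, $\tau(\ell)=\mathtt{d}(\ell)(\mathtt{d}(\ell)+4)$, noting $\rho(\ell)\le\delta(\ell)$ and $s\le\mathtt{d}(\ell)$ so $(\rho/\delta)^{1/s}\le(\rho/\delta)^{1/\mathtt{d}(\ell)}$, and using $\rho(\ell)^{1/\mathtt{d}(\ell)}=\gamma\prod_n(1+|\ell_n|^2\langle n\rangle^2)^{-(\mathtt{d}(\ell)+4)}$ together with $\delta(\ell)^{-1/\mathtt{d}(\ell)}\le C^{\mathtt{d}(\ell)}\prod_n(1+|\ell_n|^2\langle n\rangle^2)$, one obtains $\meas\{\mathtt{m}:|\omega(\mathtt{m})\cdot\ell+p|<\rho(\ell)\}\le C^{\mathtt{d}(\ell)}\gamma\prod_n(1+|\ell_n|^2\langle n\rangle^2)^{-(\mathtt{d}(\ell)+3)}$. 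Now set $\mathfrak Q:=[1,2]\setminus\bigcup_{\ell\in\Lambda}\bigcup_{p\in\Z}\{\mathtt{m}:|\omega(\mathtt{m})\cdot\ell+p|<\rho(\ell)\}$. For fixed $\ell\neq0$ only $\le4|\ell|\le4\prod_n(1+|\ell_n|^2\langle n\rangle^2)$ values of $p$ contribute, since $|\partial_{\mathtt{m}}(\omega(\mathtt{m})\cdot\ell)|\le\tfrac12|\ell|$ on $[1,2]$ so $\omega(\mathtt{m})\cdot\ell$ ranges over an interval of length $\le\tfrac12|\ell|$; this absorbs one more power of the product. Finally, for each $\mathtt{d}\ge1$,
\[
\sum_{\ell\in\Lambda,\ \mathtt{d}(\ell)=\mathtt{d}}\ \prod_{n}\big(1+|\ell_n|^2\langle n\rangle^2\big)^{-(\mathtt{d}+2)}\;\le\;\frac{1}{\mathtt{d}!}\Big(\sum_{n\in\Z}\ \sum_{k\in\Z\setminus\{0\}}(1+k^2\langle n\rangle^2)^{-(\mathtt{d}+2)}\Big)^{\mathtt{d}}\;\le\;\frac{Z^{\mathtt{d}}}{\mathtt{d}!}
\]
($Z<\infty$ absolute; the $1/\mathtt{d}!$ is the choice of unordered support, the inner sum being bounded as $2(\mathtt{d}+2)>1$), so summing $4\gamma\,C^{\mathtt{d}}Z^{\mathtt{d}}/\mathtt{d}!$ over $\mathtt{d}\ge1$ gives $\meas([1,2]\setminus\mathfrak Q)\le\mathtt{C}\gamma$, as claimed. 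The exponent $\tau(\ell)=\mathtt{d}(\ell)(\mathtt{d}(\ell)+4)$ is comfortably generous: one factor of the product kills the $p$-count, about $\mathtt{d}(\ell)$ factors kill $\delta(\ell)^{-1/\mathtt{d}(\ell)}$, two more make the $\ell$-sum converge.

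\textbf{Main obstacle.} The real difficulty is the transversality step: obtaining a lower bound on $\max_k|\partial_{\mathtt{m}}^k(\omega(\mathtt{m})\cdot\ell)|$ that is at once uniform in $\mathtt{m}\in[1,2]$ and explicit in the support of $\ell$, with only the single parameter $\mathtt{m}$ available. The functions $\mathtt{m}\mapsto\sqrt{n^2+\mathtt{m}}$ do decouple, but the decoupling degenerates polynomially as the indices grow; the Vandermonde computation quantifies this loss precisely through the node separation $|y_r-y_{r'}|\gtrsim\langle n_r\rangle^{-2}\langle n_{r'}\rangle^{-2}$, i.e. $|n_r^2-n_{r'}^2|\ge1$. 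This $\langle n\rangle$-dependence is exactly what forces the \emph{full} product $\prod_n(1+|\ell_n|^2\langle n\rangle^2)^{-\tau(\ell)}$ in \eqref{diofSetPRIMA}; dropping the two largest indices, as in \eqref{goodsmalldivTeorema}, requires the finer and genuinely one-dimensional asymptotic analysis of subsection \ref{sec:improved}.
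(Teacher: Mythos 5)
Your proposal is correct and follows essentially the same route as the paper: fold the $\pm j$ indices (your $\tilde\ell_n=\ell_n+\ell_{-n}$, the paper's reduction to \eqref{Nosuperact}), establish transversality of $\mathtt{m}\mapsto\omega(\mathtt{m})\cdot\ell$ via a Vandermonde determinant in the distinct nodes $\omega_{n_r}^{-2}$ (the paper's Lemma \ref{lem:vander}), apply a R\"ussmann/Eliasson sub-level set lemma (the paper's Lemma \ref{v.112}), bound the number of relevant $p\in\Z$ (the paper's Lemma \ref{lem:taglio}), and sum over $\ell$ by support size. Your presentation of the determinant lower bound directly from the node separation $|y_r-y_{r'}|\ge\langle n_r\rangle^{-2}\langle n_{r'}\rangle^{-2}$ is a cleaner packaging of the paper's $M^{-1}$ estimate, and your summability step is spelled out rather than deferred to \cite{BMP:CMP}, but the content is the same.
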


The proof of the Theorem  involves several argument which will be discussed below.
First of all let us define the quantity (see \eqref{mass-momindici})
\begin{equation}\label{smallDiv}
{\bf \Psi}(\mathtt{m},\ell,p):=\psi(\mathtt{m},\ell)+a\,,\qquad \psi(\mathtt{m},\ell):=\omega\cdot\ell\,, \qquad \forall \ell\in \mathcal{M}\,,\;\;\; a\in \Z\,,
\end{equation}
and recall that we shall provide lower bounds on $\psi(\omega, \ell)$
only for $\ell$ belonging to the set $\Lambda$ in \eqref{restrizioni indici}.
Moreover, according to the notation \eqref{polloarrosto} and \eqref{insiemeAA}, 
we can write the function in \eqref{smallDiv} as
\begin{equation}\label{pizzapomo}
\psi(\mathtt{m},\ell)=\sum_{i=1}^{\mathtt{d}}\ell_{j_i}\omega_{j_i}\,,
\qquad j_i\in\mathcal{A}(\ell)\subset\mathbb{Z}\,.
\end{equation}

\vspace{0.5em}
\noindent
{\bf Estimates of a single ``bad set''.}
We consider, for any fixed $\ell\in \Lambda$  and $\eta>0$,
we define the ``bad set'' of parameters 
\begin{equation}\label{diofSet44}
\mathcal{B}(\ell,	a):=
\Big\{
\mathtt{m}\in [1,2] \, :\, |\omega\cdot\ell+a|
\leq
\gamma^{\mathtt{d}}
\prod_{n\in\mathbb{Z}}\frac{1}{(1+|\ell_{n}|^{2}\langle n\rangle^{2})^{\tau}} 
\Big\}
\end{equation}
with $\tau$ as in \eqref{diofSetPRIMA}. In the following we show that  Lebesgue measure of $\mathcal{B}(\ell,a)$ is bounded by
$\gamma g(\ell)$ where $g(\ell)$ decays in $\ell\in \Lambda$. 
The ideas  involved are quite standard, see for instance \cite{Bambusi-Grebert:2006}.
The purpose of this subsection is to adapt such ideas to our non classical Diophantine condition.

We have the following.
\begin{lemma}\label{lem:vander}
For any $\ell\in \Lambda$ there exists $1\leq k\leq \mathtt{d}(\ell)$ such that
\begin{equation}\label{albero2}
|\partial_{\mathtt{m}}^{k}\psi(\mathtt{m},\ell)|
{\geq\prod_{j\in\mathcal{A}(\ell)}\frac{1}{(1+|\ell_{j}|^{2}\langle j\rangle^{2})^{{\mathtt{d}(\ell)}+1}}}
\,.
\end{equation}
\end{lemma}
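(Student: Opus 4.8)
The plan is to exploit the structure $\psi(\mathtt{m},\ell)=\sum_{i=1}^{\mathtt{d}}\ell_{j_i}\sqrt{j_i^2+\mathtt{m}}$, whose terms are (up to the coefficients $\ell_{j_i}$) the functions $f_i(\mathtt{m}):=\sqrt{j_i^2+\mathtt{m}}$. First I would compute the higher $\mathtt{m}$-derivatives: since $\frac{d^k}{d\mathtt{m}^k}(j^2+\mathtt{m})^{1/2}=c_k\,(j^2+\mathtt{m})^{1/2-k}$ with $c_k=\prod_{r=0}^{k-1}(\tfrac12-r)\neq 0$ a nonzero rational depending only on $k$, we get
\[
\partial_{\mathtt{m}}^{k}\psi(\mathtt{m},\ell)=c_k\sum_{i=1}^{\mathtt{d}}\ell_{j_i}(j_i^2+\mathtt{m})^{\frac12-k}\,.
\]
The key point is a non-degeneracy/linear-independence statement: the functions $\mathtt{m}\mapsto (j^2+\mathtt{m})^{1/2}$, for distinct $|j|$, together with their derivatives, cannot all be simultaneously small, because they form (after a change of variable) a quasi-Vandermonde/generalized-exponent system. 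The standard way to quantify this is a Vandermonde-determinant argument: consider the $\mathtt{d}\times\mathtt{d}$ matrix $M(\mathtt{m})$ with entries $\big((j_i^2+\mathtt{m})^{1/2-k}\big)_{1\le i\le \mathtt{d},\,1\le k\le \mathtt{d}}$; its determinant factors as a product of a Vandermonde-type determinant in the quantities $x_i:=(j_i^2+\mathtt{m})^{-1}$ times $\prod_i (j_i^2+\mathtt{m})^{-1/2}$, hence is bounded below in absolute value by $\prod_{i<i'}|x_i-x_{i'}|\cdot\prod_i(j_i^2+\mathtt{m})^{-1/2}$. Since the $j_i$ have distinct absolute values (the reordering in the preceding Definition collapses the $\pm j$ symmetry, and $\ell\in\Lambda$ rules out the resonant configurations), the gaps $|x_i-x_{i'}|=\frac{|j_i^2-j_{i'}^2|}{(j_i^2+\mathtt{m})(j_{i'}^2+\mathtt{m})}$ are strictly positive and can be bounded from below by $\langle j_i\rangle^{-2}\langle j_{i'}\rangle^{-2}$ up to constants, using $|j_i^2-j_{i'}^2|\ge 1$ and $j^2+\mathtt{m}\le 3\langle j\rangle^2$ on $\mathtt{m}\in[1,2]$.

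From the determinant lower bound I would conclude as follows. If all derivatives $\partial_{\mathtt{m}}^k\psi$, $k=1,\dots,\mathtt{d}$, were simultaneously too small, then the vector $(\ell_{j_1},\dots,\ell_{j_\mathtt{d}})^T$, which (up to the nonzero constants $c_k$) is mapped by $M(\mathtt{m})$ to $(\partial_{\mathtt{m}}\psi,\dots,\partial_{\mathtt{m}}^{\mathtt{d}}\psi/c_{\mathtt{d}})$-type data, would have to be small; but $|\ell_{j_i}|\ge 1$ forces a contradiction via Cramer's rule, which expresses $\ell_{j_i}$ as a ratio of a determinant of a matrix (columns replaced by the derivative data) over $\det M(\mathtt{m})$. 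Bounding the numerator determinant from above by the product of the small derivative bounds times combinatorial/size factors (each entry of $M$ is $\le 1$ since $j_i^2+\mathtt{m}\ge 1$ and $1/2-k<0$), and the denominator from below as above, yields $1\le |\ell_{j_i}|\le (\text{poly factors})\cdot \max_k|\partial_{\mathtt{m}}^k\psi|\cdot\prod_{j\in\mathcal A(\ell)}(1+|\ell_j|^2\langle j\rangle^2)^{?}$, and one reads off that $\max_{1\le k\le \mathtt{d}}|\partial_{\mathtt{m}}^k\psi|$ must be at least the claimed quantity $\prod_{j\in\mathcal A(\ell)}(1+|\ell_j|^2\langle j\rangle^2)^{-(\mathtt{d}+1)}$. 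The exponent $\mathtt{d}+1$ is generous: it absorbs the $\mathtt{d}(\mathtt{d}-1)/2$ gap factors from the Vandermonde, the $\mathtt{d}$ factors $(j_i^2+\mathtt{m})^{-1/2}$, the $|\ell_{j_i}|$-dependence hidden in the numerator expansion (the other derivative data $\partial_{\mathtt{m}}^k\psi$ depend linearly on the $\ell_{j_{i'}}$'s), and the purely $\mathtt{d}$-dependent constants $c_k$ and binomial factors, all of which are polynomially bounded in $\prod_j(1+|\ell_j|^2\langle j\rangle^2)$.

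The main obstacle I expect is bookkeeping the $\ell$-dependence carefully enough to land inside the stated exponent $\mathtt{d}(\ell)+1$ rather than something worse: the Cramer numerator determinant has entries that themselves are linear combinations of the $\ell_{j_{i'}}(j_{i'}^2+\mathtt{m})^{1/2-k}$, so expanding it produces cross terms in the $\ell$'s, and one must check that after dividing by $\det M$ these contribute at most $\prod(1+|\ell_j|^2\langle j\rangle^2)$ to some controlled power. A cleaner route that sidesteps part of this is to argue directly by contradiction on the linear system: assuming $|\partial_{\mathtt{m}}^k\psi|<\epsilon$ for all $k\le\mathtt{d}$ with $\epsilon$ the target bound, invert the Vandermonde system to solve for the $\ell_{j_i}$ in terms of the $\partial_{\mathtt{m}}^k\psi$, and use the explicit inverse-Vandermonde entry bounds to get $|\ell_{j_i}|<1$, contradicting integrality. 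Either way, the arithmetic input — distinctness of the $|j_i|$ and the elementary inequalities $|j_i^2-j_{i'}^2|\ge 1$, $1\le j^2+\mathtt{m}\le 3\langle j\rangle^2$ on $[1,2]$ — is the only place the specific Klein-Gordon dispersion and the restriction to $\Lambda$ enter, and the rest is the standard degenerate-KAM Vandermonde machinery of \cite{Bambusi-Grebert:2006}.
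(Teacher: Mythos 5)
Your proposal follows essentially the same route as the paper: compute the $k$-th $\mathtt{m}$-derivatives, observe that they form a Vandermonde system in $\omega_{j_i}^{-2}$, bound the inverse matrix, and use integrality of $\ell$ to produce the lower bound on $\max_{1\le k\le \mathtt{d}}|\partial_{\mathtt{m}}^k\psi|$. Your bookkeeping (entries of $M$ bounded by $1$ since the exponents are negative, gap lower bound $|x_i-x_{i'}|\gtrsim \langle j_i\rangle^{-2}\langle j_{i'}\rangle^{-2}$, exponent $\mathtt{d}+1$ absorbing the Vandermonde factors and the $\mathtt{d}$-dependent constants) is consistent with what the paper does.

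There is, however, one genuine gap: the reduction to distinct $|j_i|$, which you justify too lightly. The Definition you cite ($m=m(v)$) merely lists the support of $v$ in decreasing order of $|j|$ with repetitions; it does not collapse $\pm j$ pairs. If both $j$ and $-j$ lie in $\mathcal{A}(\ell)$ then $\omega_j=\omega_{-j}$, your Vandermonde in $x_i=(j_i^2+\mathtt{m})^{-1}$ has two identical columns, its determinant vanishes, and the inversion/Cramer step fails. The paper performs the collapse explicitly inside the proof: group $\ell_j\omega_j+\ell_{-j}\omega_{-j}=(\ell_j+\ell_{-j})\omega_j$, pass to the reduced vector with components $\tilde\ell_{q}:=\ell_q+\ell_{-q}$ (and drop any zero entries), and only then set up the Vandermonde system of the (possibly smaller) size $\tilde{\mathtt{d}}\le\mathtt{d}$. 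The point you need to make explicit is that $\ell\in\Lambda$ (equivalently $(\alpha,\beta)\notin\mathtt{R}$, so the resonant configuration \eqref{def:resonantset} is excluded) is exactly what guarantees $\tilde\ell\neq 0$; without it the reduced vector could vanish identically and there would be nothing to invert. Once this reduction is carried out, your argument lands where the paper's does.
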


\begin{proof}
To lighten the notation we shall write $\mathtt{d}$ instead of $\mathtt{d}(\ell)$.
{Moreover for any fixed $\ell\in \Lambda$ (recall \eqref{insiemeAA})
we shall write $\mathcal{A}(\ell)\equiv\{j_{1},\ldots,j_{\td}\}$ for some $j_{i}\in \Z$, $i=1,\ldots, \td$.}
In this way, after a reordering of the indexes we can write 
$\ell = (\bar\ell,0),$  where $\bar\ell = (\ell_{j_1},\ldots,\ell_{j_{\td}})$. 
Without loss of generality, 
we can always assume that the vector $\bar{\ell}$ satisfies 
\begin{equation}\label{Nosuperact}
j_i\neq-j_k\,,\qquad \forall\; j,k=1,\ldots,\mathtt{d}\,.
\end{equation}
Indeed,  the d-pla $({j_1},\ldots,{j_{\td}})$ can be written as
\[
(k_1,\ldots, k_{p}, q_1,-q_1, q_2, -q_2\, \ldots, q_{r} , - q_{r})\,,\qquad 0\leq p\leq \mathtt{d}
\]
for some $0\leq p\leq \mathtt{d}$ and $p+2r=\mathtt{d}$, 
where $k_i$, $i=1,\ldots, p$ satisfy \eqref{Nosuperact}. 
The small divisors has the form
\[
\omega\cdot \bar{\ell}=\sum_{i=1}^{\mathtt{d}}\omega_{j_i}\ell_{j_i}=
\sum_{i=1}^{p}\omega_{k_i}\ell_{k_i}+\sum_{i=1}^{r} \omega_{q_i}(\ell_{q_i}+\ell_{-q_i})\,.
\]
Hence we can define
\[
\tilde{\ell}=(\tilde{\ell}_{k_1},\ldots, \tilde{\ell}_{k_p}, \tilde{\ell}_{q_1}, \ldots, \tilde{\ell}_{q_r} )\,,
\qquad {\rm where} \quad \left\{\begin{aligned}
&\tilde{\ell}_{k_i}=\ell_{k_i}\,,\quad i=1,\ldots p\,,
\\
& \tilde{\ell}_{q_i}=\ell_{q_i}+\ell_{-q_i}\,,\quad i=1,\ldots, r\,.
\end{aligned}\right.
\]
Since $\ell\in \Lambda$
it is not possible that at the same time $p=0$ and $\ell_{q_i}+\ell_{-q_i}=0$
 for any $i=1,\ldots, r$. Otherwise $\ell$ is a resonant vector (recall \eqref{def:resonantset}). 
 As a consequence up to reducing the length of $\tilde{\ell}$ to $\tilde{\td}=\td(\tilde{\ell})\leq \td(\ell) $
 (by eliminating the components for which $\ell_{q_i}+\ell_{-q_i}=0$), 
 we have obtained a vector satisfying condition \eqref{Nosuperact}
 with $\tilde{\td}\leq \td$.

Hence from now on we consider $\ell\in \Lambda$  with $\td(\ell)=\td$ and satisfying \eqref{Nosuperact}.
\noindent
Notice that, for any $k\geq1$, 
\begin{equation}\label{albero3}
\partial_{\mathtt{m}}^{k}\psi(\mathtt{m},\ell)
=
\sum_{i=1}^{\mathtt{d}}\ell_{j_i}\partial_{\mathtt{m}}^{k}\omega_{j_i}
=
\Gamma(k)\sum_{i=1}^{\mathtt{d}}\ell_{j_i}(\omega_{j_i})^{1-2k}\,,
\end{equation}
where
\[
\Gamma(1) = \frac12,\quad \Gamma(k):=\frac{(-1)^{k+1}}{2^{k}}(2k-3)!! \quad k\geq 2 \,.
\]
Let us define $\mathtt{a}:=(\mathtt{a}_i)_{i=1,\ldots,\mathtt{d}}\in \mathbb{R}^{\mathtt{d}}$
as
\begin{equation}\label{albero4}
\partial_{\mathtt{m}}^{k}\psi(\mathtt{m},\ell)
=
\mathtt{a}_{k}\,,\qquad k=1,\ldots,d\mathbb{d}\,.
\end{equation}
Our aim is to prove that there is at least one component of the vector $\mathtt{a}$
satisfying the bound \eqref{albero2}.
In view of \eqref{albero3} we rewrite \eqref{albero4} as
\begin{equation}\label{albero6}
\Gamma M O \ell=\mathtt{a}\,,
\end{equation}
where the $\mathtt{d}\times\mathtt{d}$ matrices are defined as 
\begin{equation}\label{albero10}
\begin{aligned}
\Gamma&:=\left(
\begin{matrix}
\Gamma(1) &\ldots & \ldots&0 \\
0 & \Gamma(2) & \ldots &\vdots\\
\vdots &\ldots & \ddots & \vdots \\
0&\ldots &\ldots & \Gamma(\mathtt{d})
\end{matrix}
\right)\,,
\qquad
O:=\left(
\begin{matrix}
\omega_{j_1}^{-1} &\ldots & \ldots&0 \\
0 & \omega_{j_2}^{-1} & \ldots &\vdots\\
\vdots &\ldots & \ddots & \vdots \\
0&\ldots &\ldots & \omega_{j_\mathtt{d}}^{-1}
\end{matrix}
\right)\,,
\\
M&:=\left(
\begin{matrix}
1 &\ldots & \ldots&1 \\
\omega_{j_1}^{-2\cdot 1} & \ldots & \ldots &\omega_{j_{\mathtt{d}}}^{-2\cdot 1}\\
\vdots &\ldots & \ldots & \vdots \\
\omega_{j_{1}}^{-2\cdot (\mathtt{d}-1)}&\ldots &\ldots &\omega_{j_{\mathtt{d}}}^{-2\cdot(\mathtt{d}-1)}
\end{matrix}
\right)\,.
\end{aligned}
\end{equation}
Notice that the matrix $M$ is a Vandermonde matrix.
Moreover using that $\ell\in \Lambda$ and that \eqref{Nosuperact} holds, 
its  determinant is given by
\[
{\rm det}(M)=\prod_{i\neq k}(\omega_{j_i}^{-2}-\omega_{j_k}^{-2})
\neq 0\,,
\]
so that the matrix $M$ is invertible.
It is also easy to check that
\[
\max_{i,k=1,\ldots,\mathtt{d}}|(M^{-1})_{i}^{k}|\leq (\mathtt{d}-1)!
\prod_{i\neq k}\frac{\omega_{j_i}^{2}\omega_{j_k}^{2}}{\omega_{j_i}^{2}-\omega_{j_k}^{2}}\,
{\lesssim} \,2^{-\mathtt{d}}\mathtt{d}^{-1}\big(\prod_{i=1}^{\mathtt{d}}\omega_{j_i}\big)^{\mathtt{d}}\sim
2^{-\mathtt{d}}\mathtt{d}^{-1}\big(\prod_{i=1}^{\mathtt{d}}\jap{j_i}\big)^{\mathtt{d}}\,.
\]
Recalling \eqref{albero10} we note
\[
\max_{i=1,\ldots,\mathtt{d}}|(\Gamma^{-1})_{i}^{i}|\leq 2^{\mathtt{d}}\,,\quad
\max_{i=1, \ldots,\mathtt{d}}|(O^{-1})_{i}^{i}|\leq \max_{i=1,\ldots,\td}\jap{j_i}\,.
\]
Therefore
\begin{equation}\label{albero7}
\max_{i,k=1,\ldots,\mathtt{d}}|\big((\Gamma MO)^{-1})_{i}^{k}|\lesssim
\mathtt{d}^{-1}
\big(\prod_{i=1}^{\mathtt{d}}\jap{j_i}\big)^{\mathtt{d}+1}\lesssim 
\mathtt{d}^{-1}
\Big(\prod_{i=1}^{\mathtt{d}}
\frac{1}{(1+|\ell_{j_i}|^{2}\langle j_i\rangle^{2})}\Big)^{-\mathtt{d}-1}\,.
\end{equation}
Since by \eqref{albero6}, we have $\ell=(\Gamma MO)^{-1}\mathtt{a}$,
we deduce
\[
1\leq |\ell|\lesssim \mathtt{d}
\max_{i,k=1,\ldots,\mathtt{d}}|\big((\Gamma MO)^{-1})_{i}^{k}|\|\mathtt{a}\|_{\ell^{\infty}}\,,
\]
which, together with \eqref{albero7}, implies the bound \eqref{albero2}.
\end{proof}

Now we need the following result 
(see for example Lemma B.1 \cite{Eliasson:cetraro} ):
\begin{lemma}\label{v.112}
Let  $\mathfrak{g}(x)$ be a $C^{n+1}$-smooth 
function on the segment $[1,2] $ such that 
\[
|\mathfrak{g}'|_{C^n} =\beta \quad {\rm and}\qquad  
\max_{1\le k\le n}\min_x|\partial^k \mathfrak{g}(x)|=\sigma\,.
\]
Then  one has
\[
\meas(\{x\mid |\mathfrak{g}(x)|\leq\rho\} )\leq 
C_n \left(\beta \s^{-1}+1\right) 
(\rho \s^{-1})^{1/n}\,.
\]
\end{lemma}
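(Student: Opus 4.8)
The plan is to reduce the statement to two elementary facts about a one–variable function one of whose derivatives is bounded away from zero. Since $\max_{1\le k\le n}\min_x|\partial^k\mathfrak{g}(x)|=\sigma$, I would first fix an index $k^*\in\{1,\dots,n\}$ realizing this maximum, so that $|\partial^{k^*}\mathfrak{g}(x)|\ge\sigma$ for \emph{every} $x\in[1,2]$. The proof then rests on combining: \textbf{(a)} $\mathfrak{g}$ has at most $n$ maximal intervals of monotonicity, so the sublevel set $\{x:|\mathfrak{g}(x)|\le\rho\}$ is a union of at most $n$ subintervals; and \textbf{(b)} on any interval on which $\mathfrak{g}$ stays in $[-\rho,\rho]$, the lower bound $|\partial^{k^*}\mathfrak{g}|\ge\sigma$ forces its length to be at most $2k^*(\rho/\sigma)^{1/k^*}$.

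For \textbf{(a)}, since $\partial^{k^*}\mathfrak{g}$ is continuous and nowhere zero on $[1,2]$ it has constant sign, hence $\partial^{k^*-1}\mathfrak{g}$ is strictly monotone and has at most one zero. Descending in the order of differentiation and applying Rolle's theorem repeatedly (if $\partial^{j+1}\mathfrak{g}$ has at most $m$ zeros then $\partial^{j}\mathfrak{g}$ has at most $m+1$), one gets that $\partial^{j}\mathfrak{g}$ has at most $k^*-j$ zeros for $1\le j\le k^*$; in particular $\mathfrak{g}'$ has at most $k^*-1\le n-1$ zeros on $[1,2]$. Therefore $[1,2]$ is the union of at most $n$ closed intervals $I_1,\dots,I_p$ (with $p\le n$) on the interior of each of which $\mathfrak{g}'$ has constant sign, i.e. $\mathfrak{g}$ is strictly monotone; consequently each $E_s:=\{x\in I_s:|\mathfrak{g}(x)|\le\rho\}$ is a single (possibly empty or degenerate) subinterval.

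For \textbf{(b)}, fix such an $E_s=[c,d]$ with $L:=d-c>0$ and take the $k^*+1$ equally spaced nodes $x_i:=c+iL/k^*$, $i=0,\dots,k^*$. By the mean–value theorem for divided differences, $[x_0,\dots,x_{k^*}]\mathfrak{g}=\partial^{k^*}\mathfrak{g}(\xi)/(k^*!)$ for some $\xi\in[c,d]$, so $|[x_0,\dots,x_{k^*}]\mathfrak{g}|\ge\sigma/(k^*!)$. On the other hand $[x_0,\dots,x_{k^*}]\mathfrak{g}=\sum_i\mathfrak{g}(x_i)/\prod_{j\ne i}(x_i-x_j)$, and for equally spaced nodes the absolute values of these coefficients sum to $2^{k^*}(k^*/L)^{k^*}/(k^*!)$; since $|\mathfrak{g}(x_i)|\le\rho$ on $E_s$, this yields $|[x_0,\dots,x_{k^*}]\mathfrak{g}|\le \rho\,2^{k^*}(k^*/L)^{k^*}/(k^*!)$. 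Comparing the two estimates gives $L\le 2k^*(\rho/\sigma)^{1/k^*}$.

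Summing over the at most $n$ pieces, $\meas\{x:|\mathfrak{g}(x)|\le\rho\}\le 2nk^*(\rho/\sigma)^{1/k^*}$. If $\rho\le\sigma$ I would then use $k^*\le n$ and $(\rho/\sigma)^{1/k^*}\le(\rho/\sigma)^{1/n}$ to bound this by $2n^2(\rho\sigma^{-1})^{1/n}$, while if $\rho>\sigma$ the trivial estimate $\meas\le 1<(\rho\sigma^{-1})^{1/n}$ is already enough; hence the stated inequality holds with $C_n=2n^2$. Note that, since $\beta=|\mathfrak{g}'|_{C^n}\ge\|\partial^{k^*}\mathfrak{g}\|_{\infty}\ge\sigma$, one has $\beta\sigma^{-1}+1\ge 1$, so this (slightly stronger) bound implies the one in the statement, and the factor $\beta\sigma^{-1}+1$ is not actually needed here. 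The argument has no deep obstacle; the only slightly delicate points are the Rolle count ensuring finitely many monotonicity intervals — in particular that no $\partial^j\mathfrak{g}$ with $j\le k^*$ can vanish on a whole subinterval, which is ruled out by $\partial^{k^*}\mathfrak{g}\ne0$ — and the bookkeeping passing from the natural exponent $1/k^*$ to the required $1/n$. This is essentially the classical Pyartli/R\"ussmann-type sublevel estimate.
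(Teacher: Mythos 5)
Your argument is correct. One small remark before the comparison: you implicitly assume $\sigma>0$ (otherwise the right-hand side is infinite and there is nothing to prove), and your Rolle count is best phrased as a bound on the number of \emph{distinct} zeros of $\partial^{j}\mathfrak{g}$ (which also disposes of the "vanishing on a subinterval" worry automatically); with that reading, every step checks out: the choice of $k^*$ with $|\partial^{k^*}\mathfrak{g}|\ge\sigma$ everywhere, the bound of at most $k^*\le n$ monotonicity intervals, the equally-spaced divided-difference estimate $\sigma/k^*!\le\rho\,2^{k^*}(k^*/L)^{k^*}/k^*!$ giving $L\le 2k^*(\rho\sigma^{-1})^{1/k^*}$, and the passage from exponent $1/k^*$ to $1/n$ split according to $\rho\le\sigma$ or $\rho>\sigma$.

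Note, however, that the paper does not prove this lemma at all: it is quoted verbatim from Lemma B.1 of Eliasson's Cetraro notes \cite{Eliasson:cetraro}, whose classical proof proceeds by induction on the order of differentiation (estimating the sublevel set of $\mathfrak{g}$ through the sublevel sets of $\mathfrak{g}'$, which is where the factor $\beta\sigma^{-1}+1$ naturally enters as a count of oscillations). Your route is genuinely different and self-contained: Rolle's theorem bounds the number of monotonicity intervals by $k^*\le n$ using only the nonvanishing of $\partial^{k^*}\mathfrak{g}$, and the divided-difference (Lagrange/mean-value) identity bounds the length of each piece directly, with no recourse to the upper bound $\beta$. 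What this buys is a slightly stronger, $\beta$-free inequality $\meas\{|\mathfrak{g}|\le\rho\}\le 2n^2(\rho\sigma^{-1})^{1/n}$ (indeed $(\rho\sigma^{-1})^{1/k^*}$ when $\rho\le\sigma$), which trivially implies the stated bound since $\beta\sigma^{-1}+1\ge1$; the cited inductive proof is arguably more elementary per step but yields the weaker constant. Either version is amply sufficient for the measure estimates in Proposition \ref{thm:main}, where only the dependence on $\rho$ and $\sigma$ matters.
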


 Thanks to Lemma \ref{lem:vander} we shall apply Lemma \ref{v.112}
with $n=\mathtt{d}$ and
{
\[
\begin{aligned}
&
\s\geq \prod_{j\in \mathcal{A}(\ell)}
\frac{1}{(1+|\ell_{j}|^{2}\langle j\rangle^{2})^{\mathtt{d}(\ell)+1}}\,,
\\&
\rho=\gamma^{\mathtt{d}(\ell)}\prod_{n\in\mathbb{Z}}
\frac{1}{(1+|\ell_{n}|^{2}\langle n\rangle^{2})^{\tau(\ell)}}
=\gamma^{\mathtt{d}(\ell)}\prod_{j\in\mathcal{A}(\ell)}
\frac{1}{(1+|\ell_{j}|^{2}\langle j\rangle^{2})^{\tau(\ell)}}
\\&
\beta\leq\mathtt{d}(\ell)!\lesssim \prod_{j\in\mathcal{A}(\ell)}(1+|\ell_{j}|^{2}\langle j\rangle^{2})\,.
\end{aligned}
\]
}
Therefore we obtain
{
\begin{equation}\label{misuraBad}
\meas(\mathcal{B}(\ell,a))\lesssim\gamma
\Big(\prod_{j\in\mathcal{A}(\ell)}
\frac{1}{1+|\ell_{j}|^{2}\langle j\rangle^{2}}\Big)^{\frac{\tau(\ell)}{\mathtt{d}(\ell)}-\mathtt{d}(\ell)-2}\,.
\end{equation}
}

\vspace{0.5em}
\noindent
{\bf Summability.} 
Recalling \eqref{diofSet44} we define the set 
\[
\mathcal{B}:=\bigcup_{\ell\in\Lambda,a\in \Z}\mathcal{B}(\ell,a)\,.
\]
In view of  \eqref{diofSetPRIMA} we have that Theorem \ref{thm:main} follows
by setting $\mathfrak{Q}:=[1,2]\setminus\mathcal{B}$ and provided that 
$\mathcal{B}$ satisfies the estimate
\[
|\mathcal{B}|\lesssim\gamma\,.
\]
In order to prove this we first need the following.
\begin{lemma}\label{lem:taglio}
For any fixed $\ell\in \Lambda$
we have that $\mathcal{B}(\ell,a)\neq \emptyset$ implies that 
\begin{equation}\label{caldo21}
|a|\leq\mathtt{k}(\ell):=2 \prod_{i\in\Z}(1+|\ell_i|\langle i\rangle^2)\,.
\end{equation}
\end{lemma}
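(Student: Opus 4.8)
The idea is straightforward: if the bad set $\mathcal{B}(\ell,a)$ is nonempty, then there is at least one mass $\mathtt{m}\in[1,2]$ for which $|\omega(\mathtt{m})\cdot\ell + a|$ is small, hence in particular bounded by (say) $1$. From this I get $|a| \leq |\omega(\mathtt{m})\cdot\ell| + 1$, and it only remains to bound $|\omega(\mathtt{m})\cdot\ell|$ from above uniformly in $\mathtt{m}\in[1,2]$ by something like $\mathtt{k}(\ell) - 1$, which reduces to an elementary estimate on the frequencies.

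First I would recall that $\psi(\mathtt{m},\ell) = \sum_{i=1}^{\mathtt{d}} \ell_{j_i}\omega_{j_i}(\mathtt{m})$ with $j_i \in \mathcal{A}(\ell)$, and that for $\mathtt{m}\in[1,2]$ and $j\in\Z$ one has $\omega_j(\mathtt{m}) = \sqrt{j^2 + \mathtt{m}} \leq \sqrt{j^2 + 2} \leq 2\langle j\rangle$ (indeed $j^2 + 2 \leq 4\langle j\rangle^2$ since $\langle j\rangle^2 \geq \max\{1, j^2\}$ and a crude check at $j=0$ and $|j|\geq 1$). Hence
\[
|\psi(\mathtt{m},\ell)| \leq \sum_{i=1}^{\mathtt{d}} |\ell_{j_i}|\,\omega_{j_i}(\mathtt{m}) \leq 2\sum_{i\in\mathcal{A}(\ell)} |\ell_i|\langle i\rangle \leq 2\prod_{i\in\mathcal{A}(\ell)}(1 + |\ell_i|\langle i\rangle)\,,
\]
using that a sum of nonnegative terms $x_i \geq 0$ with $x_i = |\ell_i|\langle i\rangle \geq 1$ (since $\ell_i\neq 0$ on $\mathcal{A}(\ell)$) is dominated by $\prod (1+x_i)$: expanding the product gives the sum $\sum x_i$ among its terms, all of which are nonnegative. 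Since the factors over $i\notin\mathcal{A}(\ell)$ equal $1$, this product equals $2\prod_{i\in\Z}(1+|\ell_i|\langle i\rangle) \leq 2\prod_{i\in\Z}(1+|\ell_i|\langle i\rangle^2) = \mathtt{k}(\ell)$, and in fact we even get the slightly stronger $|\psi(\mathtt{m},\ell)| \leq \mathtt{k}(\ell)$ with room to spare.

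Now suppose $\mathcal{B}(\ell,a) \neq \emptyset$ and pick $\mathtt{m}$ in it. By \eqref{diofSet44} the quantity $|\omega(\mathtt{m})\cdot\ell + a|$ is bounded by $\gamma^{\mathtt{d}}\prod_n (1+|\ell_n|^2\langle n\rangle^2)^{-\tau}$, which is at most $1$ (for $\gamma \leq 1$, each factor being $\leq 1$ since the exponent $\tau = \mathtt{d}(\mathtt{d}+4)$ is positive). Therefore
\[
|a| \leq |\omega(\mathtt{m})\cdot\ell| + |\omega(\mathtt{m})\cdot\ell + a| \leq \mathtt{k}(\ell) - 1 + 1 = \mathtt{k}(\ell)\,,
\]
where I used the strict-enough bound from the previous paragraph; in any case $|a| \leq 2\prod_{i\in\Z}(1+|\ell_i|\langle i\rangle^2) = \mathtt{k}(\ell)$, which is \eqref{caldo21}. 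The only mildly delicate point is getting the elementary frequency bound $\omega_j(\mathtt{m}) \leq 2\langle j\rangle$ and the sum-versus-product comparison tight enough that the constant $2$ in front suffices; both are routine, so there is no real obstacle here — the lemma is a bookkeeping step needed to make the union over $a\in\Z$ in the summability argument a finite sum for each $\ell$.
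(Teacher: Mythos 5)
Your approach is essentially the paper's: bound $|\omega\cdot\ell|$ uniformly in $\mathtt{m}\in[1,2]$, observe that $\mathcal{B}(\ell,a)\neq\emptyset$ forces $|\omega\cdot\ell+a|\leq 1$, and close with the triangle inequality. Both proofs rest on the same elementary estimate $\omega_j\lesssim\langle j\rangle$ followed by a sum-versus-product comparison.

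The one place you wave a hand is precisely where the constant matters. Your displayed chain gives only $|\omega\cdot\ell|\le 2\prod_{i}(1+|\ell_i|\langle i\rangle)\le\mathtt{k}(\ell)$, but to conclude $|a|\le\mathtt{k}(\ell)$ from $|a|\le|\omega\cdot\ell|+1$ you need the strict gap $|\omega\cdot\ell|\le\mathtt{k}(\ell)-1$, which you assert (``room to spare'', ``routine'') without proving. The missing half-line is available from the very observation you already made: the expansion of $\prod_{i\in\mathcal{A}(\ell)}(1+x_i)$ has a free constant term $1$, so $\sum_i x_i\le\prod_i(1+x_i)-1$, and hence
\[
|\omega\cdot\ell|\;\le\; 2\sum_{i\in\mathcal{A}(\ell)}|\ell_i|\langle i\rangle \;\le\; 2\prod_{i\in\mathcal{A}(\ell)}\big(1+|\ell_i|\langle i\rangle\big)-2 \;\le\; \mathtt{k}(\ell)-2\,,
\]
which gives $|a|\le\mathtt{k}(\ell)-1\le\mathtt{k}(\ell)$. (The paper runs the same triangle-inequality argument starting from the estimate $\omega_j\le|j|+1$ and the bound $|\omega\cdot\ell|\le\prod_i(1+|\ell_i|\langle i\rangle^2)=\mathtt{k}(\ell)/2$, so the two routes differ only in the bookkeeping of constants.) Once the line above is written down your proof is complete.
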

\begin{proof}
First of all, by \eqref{dispLawWave}, we note that $|\omega_{j}|\leq |j|+1$ for any $j\in \Z$. Then, for any $\ell\in \Lambda$, we have 
\begin{equation}\label{caldo10}
|\omega\cdot\ell|\leq \prod_{i\in\Z}(1+|\ell_i|\langle i\rangle^2)\,.
\end{equation}
If $|a|\leq |\omega\cdot\ell|$ the thesis follows trivially. On the other hand assume that $|\omega\cdot\ell|<|a|$. 
Moreover, the assumption $\mathcal{B}(\ell,a)\neq \emptyset$ implies that there exists $\mathtt{m}\in[1,2]$ such that
\[
|\omega\cdot\ell+a|\leq 
\gamma^{\mathtt{d}}\prod_{n\in\mathbb{Z}}\frac{1}{(1+|\ell_{n}|^{2}\langle n\rangle^{2})^{\tau}} \leq 1\,.
\]
By the triangular inequality we deduce
\[
1\geq |\omega\cdot\ell+a|\geq \big||a|-|\omega\cdot\ell|\big|=|a|-|\omega\cdot\ell|
\qquad \Rightarrow\quad
|a|\leq 1+|\omega\cdot\ell|\,.
\]
The latter bound, together with \eqref{caldo10}, implies the thesis.
\end{proof}
\begin{remark}\label{fender}
Recalling the notation introduced in \eqref{insiemeAA}-\eqref{polloarrosto} we shall write,
see \eqref{caldo21},
\[
\mathtt{k}(\ell)=2\prod_{j\in\mathcal{A}(\ell)}(1+|\ell_{j}|\langle j\rangle^{2})
\]
\end{remark}

\begin{proof}[{\bf Proof of Proposition \ref{thm:main}}]
In view of Lemma \ref{lem:taglio} we have
\[
\mathcal{B}:=\bigcup_{\ell\in\Lambda,a\in \Z}\mathcal{B}(\ell,a)\subseteq\bigcup_{\ell\in\Lambda,|a|\leq \mathtt{k}(\ell)}\mathcal{B}(\ell,a)\,,
\]
so that (recall also Remark \ref{fender})
\[
\begin{aligned}
|\mathcal{B}|&\lesssim \sum_{\ell\in\Lambda,|a|\leq \mathtt{k}(\ell)}|\mathcal{B}(\ell,a)|\lesssim
\sum_{\ell\in\Lambda}\mathtt{k}(\ell)|\mathcal{B}(\ell,a)|
\stackrel{\eqref{misuraBad},\eqref{caldo21}}{\lesssim}
\sum_{\ell\in \Lambda}
\gamma
\Big(\prod_{j\in \mathcal{A}(\ell)}
\frac{1}{1+|\ell_{j}|^{2}\langle j\rangle^{2}}\Big)^{\frac{\tau(\ell)}{\mathtt{d}(\ell)}-\mathtt{d}(\ell)-3 }
\end{aligned}
\]
Recalling the choice $\tau(\ell)=\td(\ell)(\mathtt{d}(\ell)+4)$ in \eqref{diofSetPRIMA}, we deduce that
$|\mathcal{B}|\lesssim\gamma$ provided that
\[
\sum_{\ell\in \Lambda}
\Big(\prod_{j\in \mathcal{A}(\ell)}
\frac{1}{1+|\ell_{j}|^{2}\langle j\rangle^{2}}\Big)\simeq
\sum_{\ell\in\Lambda}\Big(\prod_{j\in\Z}
\frac{1}{1+|\ell_{j}|^{2}\langle j\rangle^{2}}\Big)\lesssim1\,.
\]
The latter bound follows reasoning exactly as in the proof of Lemma $4.1$ in \cite{BMP:CMP}.
This concludes the proof.
\end{proof}

\begin{remark}\label{rmk:ker2}
It is easy to note that if $(\al,\bt)\in \mathtt{R}$ and $\ell=\al-\bt$, the condition in  \eqref{def:resonantset}
implies that 
\[
\ell_{j}+\ell_{-j}=\al_j-\bt_j+\al_{-j}-\bt_{-j}\equiv0\,,\qquad \forall\, j\in\Z\,.
\]
Therefore  for any $(\al,\bt)\in \mathtt{R}$
and $\ell=\al-\bt$ one has that $\omega\cdot\ell\equiv0$ is identically zero  for $\mathtt{m} \in[1,2]$.
On the other hand, by Theorem \ref{thm:main}, for any $\omega\in \mathtt{P}_{\gamma}$
 one has $\omega\cdot \ell\neq0$ for any $\ell\in\Lambda$. 
\end{remark}

\subsection{Improved estimates for the Klein-Gordon equation}\label{sec:improved}
In this section we improve the estimates obtained in subsection \ref{sec:debole}
and we conclude the proof of our main Theorem \ref{thm:mainVERO}.
Define the set 
\begin{equation}\label{mass-momindici}
\mathcal{M}:=
\left\{
(\alpha,\beta)\in \mathbb{N}^{\Z}\times\N^{\Z} : \pi(\al - \bt):= \sum_{j\in\Z}j\pa{\al_j - \bt_j}= 0, 
|\alpha|+|\beta|<\infty
\right\}\,.
\end{equation}
In the following it will be convenient to use the following way of reordering of the indexes
$j\in \Z$ appearing in the Hamiltonian \eqref{HamPower}.

\begin{definition}\label{n star}
Consider a vector $v=\pa{v_i}_{i\in \Z}$  $v_i\in \N$, $|v|<\infty$. 
	
\noindent
$(i)$ We denote by $\na=\na(v)$ the vector $\pa{\na_l}_{l\in I}$ 
(where $I\subset \N$ is finite)  
which is the decreasing rearrangement of
\[
\{\N\ni h> 1\;\; \mbox{ repeated}\; v_h + v_{-h}\; \mbox{times} \} 
\cup 
\set{ 1\;\; \mbox{ repeated}\; v_1 + v_{-1} + v_0\; \mbox{times}  }
\]
	
\noindent
$(ii)$ Define the vector $m=m(v)$ as the reordering of the elements of the set
\[
\set{j\neq 0 \,,\quad \mbox{repeated}\quad  \abs{v_j} \;\mbox{times}}\,,
\]
where $D<\infty$ is its cardinality, such that
$|m_1|\ge |m_2|\ge \dots\geq |m_D|\ge 1$. 	
\end{definition}


\medskip
 Given $\al\neq\bt\in\N^\Z,$ with $|\al|+|\bt|<\infty$
 we consider $m=m(\al-\bt)$ and $\na=\na(\al+\bt)$.	
If we denote by $D$ the cardinality of $m$ and $N$ the one of $\na$ we have 
\begin{align}
D+\al_0+\bt_0&\le N\,, \label{cappella}
\\
(|m_1|,\dots,|m_D|,\underbrace{1,\;\dots \;,1}_{N-D\;\rm{times}} )\, 
&\leq\,
\pa{\na_1,\dots \na_N}\,.\label{abbacchio}
\end{align}
Set $\s_l= {\rm sign}(\al_{m_l}-\bt_{m_l})$ and note that
for every function $g$ defined on $\Z$ we have that
\begin{equation}\label{pula2}
\begin{aligned}
\sum_{i\in\Z} g(i) |\al_i-\bt_i|
&=
g(0)|\al_0-\bt_0|+
\sum_{l\geq 1} g(m_l)\,,
\\
\sum_{i\in\Z} g(i) (\al_i-\bt_i)
&=
g(0)(\al_0-\bt_0)+
\sum_{l\geq 1} \s_l g(m_l)\,.
\end{aligned}
\end{equation}
In particular, in view of \eqref{pula2}, we shall write (recall that $\ell=\alpha-\beta$)
\begin{align}
0=\pi&
=\sum_{i\in\Z}(\alpha_{i}-\beta_{i})i=\sum_{l\geq1}\s_{l}m_{l}=\s_{1}m_1+\s_{2}m_2+\sum_{l\geq3}\s_{l}m_{l}
\label{espandomomento}
\end{align}
and, setting $\widetilde{\ell}=\ell-\s_{1}e_{m_{1}}-\s_{2}e_{m_{2}}$,
\begin{align}
\omega\cdot\ell&=\sum_{i\in\Z}(\alpha_{i}-\beta_{i})\omega_{i}
={(\alpha_0 - \beta_0)\omega_0} + \sum_{l\geq1}\s_{l}\omega_{m_l}
=\s_{1}\omega_{m_1}+\s_2\omega_{m_2}+\omega\cdot\widetilde{\ell}\,,
\,,\label{espandoomeghino}
\end{align}
where $e_{i}\in \Z^{\Z}$ are the vectors of the canonical basis and
$\s_l= {\rm sign}(\al_{m_l}-\bt_{m_l})$.

\begin{remark}{\bf (Asymptotic).}\label{rmk:asy}
Recalling \eqref{dispLawWave}
 we note that, for $|j|\neq0$
 \begin{equation}\label{eq:asy}
 \omega_{j}=\sqrt{|j|^2+\tm}=|j|\sqrt{1+\frac{\tm}{|j|^{2}}}=|j|+\mathtt{r}_{j}(\tm)\,,\qquad |\tr_{j}(\tm)|\leq \frac{\tm}{2|j|}\leq 1\,.
 \end{equation}
\end{remark}
Theorem \ref{thm:mainVERO} is a direct consequence of the following.

\begin{proposition}\label{prop:stimeimproved}
Consider $\ell=\alpha-\beta$, $(\alpha,\beta)\in \Lambda$ (see \eqref{restrizioni indici}, \eqref{def:resonantset})
and let $m=m(\ell)$ according to Definition \ref{n star}.
Assume that the vector $\omega$ in \eqref{dispLawWave} belongs to the set 
$\mathtt{P}_{\gamma}$  in \eqref{diofSetPRIMA}.
Then 
one has the improved bound
\begin{equation}\label{goodsmalldiv}
|\omega\cdot\ell|\geq  \gamma^{{\tau^2(\ell)}}
\frac{1}{(\mathtt{C}N)^{6\tau(\ell)}}
\prod_{\substack{n\in\mathbb{Z} \\ n\neq m_1(\ell),m_2(\ell)}} 
\frac{1}{(1+|\ell_{n}|^{2}\langle n\rangle^{2})^{{4\tau^2(\ell)}}}\,,
\end{equation}
for some positive pure constant $\mathtt{C}$ large enough.
\end{proposition}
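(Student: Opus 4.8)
The plan is to start from the decomposition \eqref{espandoomeghino}, writing $\omega\cdot\ell=\s_1\omega_{m_1}+\s_2\omega_{m_2}+\omega\cdot\widetilde\ell$ with $\widetilde\ell=\ell-\s_1 e_{m_1}-\s_2 e_{m_2}$, and then to exploit the asymptotic expansion of Remark \ref{rmk:asy}. The key point is that the ``dangerous'' large indices $m_1,m_2$ carry, via \eqref{eq:asy}, a linear part $|m_1|,|m_2|$ which is an \emph{integer}, plus a small mass-dependent remainder $\tr_{m_1}(\tm),\tr_{m_2}(\tm)$ of size $O(1/|m_1|),O(1/|m_2|)$. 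So I would write
\[
\omega\cdot\ell=\underbrace{\big(\s_1|m_1|+\s_2|m_2|+\textstyle\sum_{l\geq 3}\s_l|m_l|\big)}_{=:p\in\Z}
+\ \s_1\tr_{m_1}+\s_2\tr_{m_2}+\sum_{l\geq 3}\s_l\,\tr_{m_l}\ +\ (\alpha_0-\beta_0)\sqrt{\tm},
\]
and observe that the purely integer contribution of the low frequencies collects into some $p\in\Z$ (using $\pi(\ell)=0$ from \eqref{espandomomento} to keep track of signs). Effectively $\omega\cdot\ell=\omega\cdot\widetilde\ell+\s_1|m_1|+\s_2|m_2|$ up to the two remainders $\tr_{m_i}$, and $\widetilde\ell$ has support $\td(\widetilde\ell)\leq \td(\ell)$. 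The strategy is then a dichotomy on the size of $|\omega\cdot\ell|$.

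First, if $|\omega\cdot\ell|$ is \emph{not} too small — say bigger than the right-hand side of \eqref{goodsmalldiv} — there is nothing to prove. So assume $|\omega\cdot\ell|<1$, say. Then I want to argue that the vector $\widetilde\ell$ together with the integer $p=\s_1|m_1|+\s_2|m_2|$ (more precisely, the full integer part extracted above) satisfies a divisor bound coming from Proposition \ref{thm:main}: indeed $\omega\in\mathtt{P}_\gamma$ controls $|\omega\cdot\widetilde\ell+p|$ from below by $\gamma^{\td(\widetilde\ell)}\prod_{n}(1+|\widetilde\ell_n|^2\langle n\rangle^2)^{-\tau(\widetilde\ell)}$ for \emph{every} integer $p$, provided $\widetilde\ell\in\Lambda$. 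Here one must check $\widetilde\ell\in\Lambda$ (it is non-resonant): this uses that $\ell\in\Lambda$, and removing one unit from the two top slots cannot create a resonant vector except in controllable degenerate cases — if it does, $\widetilde\ell$ lies in $\mathtt{R}$ and then $\omega\cdot\widetilde\ell$ is an integer identically in $\tm$ (Remark \ref{rmk:ker2}), which only helps. One then compares $|\omega\cdot\ell|$ with $|\omega\cdot\widetilde\ell+p|$: the difference is exactly the sum of remainders $\s_1\tr_{m_1}+\s_2\tr_{m_2}+\sum_{l\geq3}\s_l\tr_{m_l}+(\alpha_0-\beta_0)(\sqrt\tm-\text{(its integer part, which is }0))$. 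The tail $\sum_{l\geq 3}\tr_{m_l}$ is bounded by $\sum_{l\geq 3}\tfrac{\tm}{2|m_l|}\lesssim N$ (crudely) or, better, by $\sum_{l\geq3}|\ell_{m_l}|$ which one folds into the product, and $|\tr_{m_1}|+|\tr_{m_2}|\leq 2/|m_2|$ is \emph{small} once $|m_1|,|m_2|$ are large. The whole point: the contribution of the two top indices to $\omega\cdot\ell$, compared to the integer $p$, is only $O(1/|m_2|)$, which is negligible when $|m_2|$ is large, and when $|m_2|$ is \emph{not} large we can absorb it since then $m_1,m_2$ themselves cost only a bounded factor in the final product.

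Concretely, the main case is $|m_2|$ (hence all of $|m_3|,\dots$) large. Then $|\sum_{l}\s_l\tr_{m_l}|$ is either genuinely small or, if there are many small-index terms, bounded via $|\tr_j|\le 1$ and counting by $N$. Writing $|\omega\cdot\ell|\geq |\omega\cdot\widetilde\ell+p| - |\text{remainders}|$ is the wrong direction (we would subtract), so instead I would argue the reverse: I pick the integer $p_*$ closest to $\omega\cdot\ell-(\s_1\tr_{m_1}+\dots)$, i.e. I \emph{define} $\widetilde\ell$ and $p_*$ so that $\omega\cdot\ell = \omega\cdot\widetilde\ell + p_* + (\text{remainder})$ with the remainder controlled, and then lower-bound $|\omega\cdot\widetilde\ell+p_*|$ by $\mathtt{P}_\gamma$. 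If $|\omega\cdot\ell|$ were smaller than half of that lower bound, then $|\omega\cdot\widetilde\ell+p_*|\leq |\omega\cdot\ell|+|\text{remainder}|$ would force the remainder to be at least half the $\mathtt{P}_\gamma$-bound, which — since the remainder is $\lesssim N/|m_2|$ plus small-index junk — forces $|m_2|\lesssim N\cdot\gamma^{-\td}\prod(\dots)$; in that regime $|m_1|,|m_2|\leq\langle m_2\rangle$ can be bounded in terms of $N$ and the product, and we reinstate the $m_1,m_2$ factors into the product at the cost of raising the exponent from $\tau$ to $\tau^2$ (this is where $4\tau^2(\ell)$ and $\gamma^{\tau^2(\ell)}$ and the $(\mathtt{C}N)^{6\tau(\ell)}$ come from). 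The bookkeeping of these exponents — showing that the product $\prod_{n\neq m_1,m_2}(1+|\ell_n|^2\langle n\rangle^2)^{-4\tau^2}$ absorbs all the error terms, including the reinstated top indices and the factor $(\mathtt{C}N)^{6\tau}$ — is the main obstacle, and is essentially a careful chase of powers using $N\le |\alpha|+|\beta|$, \eqref{cappella}--\eqref{abbacchio}, and $\td(\widetilde\ell)\le\td(\ell)$. The one-dimensionality enters precisely in Remark \ref{rmk:asy}: it is what makes $\omega_j-|j|$ an $O(1/|j|)$ correction to an \emph{integer}, allowing the reduction to the integer-shifted divisor $|\omega\cdot\widetilde\ell+p|$ handled by Proposition \ref{thm:main}.
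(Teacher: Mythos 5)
Your high-level strategy matches the paper's: decompose $\omega\cdot\ell = \omega\cdot\widetilde\ell + \s_1\omega_{m_1} + \s_2\omega_{m_2}$, use the asymptotic $\omega_j = |j| + \mathtt{r}_j(\mathtt{m})$ to pull out the integer $p = \s_1|m_1|+\s_2|m_2|$, invoke the integer-shifted Diophantine bound from $\mathtt{P}_\gamma$ on $|\omega\cdot\widetilde\ell + p|$, and dichotomize on whether the remainder $\s_1\mathtt{r}_{m_1}+\s_2\mathtt{r}_{m_2}$ is negligible. That is indeed the paper's mechanism, and you correctly identify that the one-dimensionality of the problem is what makes the linear part $|j|$ of $\omega_j$ an \emph{integer}, which is the engine of the improvement.

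However, there is a genuine gap at the end of the dichotomy. When the remainder is \emph{not} small you correctly deduce a bound on $|m_2|$, but you then need to control $|m_1|$ as well in order to reinstate the $m_1$-factor in the product — and $|m_1|\ge |m_2|$ by definition, so bounding $|m_2|$ alone gives you nothing about $|m_1|$. Your assertion that ``$|m_1|,|m_2|\leq\langle m_2\rangle$ can be bounded'' is simply false (the first inequality is backwards). The paper supplies the missing ingredient through its Steps 1--2: it first shows (Step 1) that one may assume $\sum_i(\al_i-\bt_i)|i| \le 10N$, from which it derives $|\omega\cdot\ell|\le 20N$ and $|\omega\cdot\widetilde\ell|\le 2N\langle m_3\rangle$, hence $|\s_1\omega_{m_1}+\s_2\omega_{m_2}|\le 22N\langle m_3\rangle$. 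In the case $\s_1\s_2=1$ this immediately bounds \emph{both} $|m_1|$ and $|m_2|$ by $22N\langle m_3\rangle$, so the remainder argument is not even needed; in the case $\s_1\s_2=-1$ it gives the key a priori difference bound $|m_1|-|m_2|\le 23N\langle m_3\rangle$, which is what allows one to pass from a bound on $|m_2|$ to a bound on $|m_1|$. Without this preliminary momentum-weighted bound (and the associated $\s_1\s_2=\pm1$ split), $|m_1|$ can be arbitrarily large even with $|m_2|$ controlled, and the reinstatement step collapses. Also note a secondary confusion: your initial displayed decomposition extracts \emph{all} the $\s_l|m_l|$ into $p$, which would leave a remainder of size $O(N)$ from the low-frequency $\mathtt{r}_{m_l}$'s and go nowhere; only the top two indices should be peeled off (as you eventually write). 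The stray factor ``$N/|m_2|$ plus small-index junk'' in your remainder estimate is a vestige of that wrong decomposition — with the correct $\widetilde\ell$, the remainder is simply $|\mathtt{r}_{m_1}|+|\mathtt{r}_{m_2}|\lesssim 1/|m_2|$.
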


\begin{proof}
First of all, since $\omega\in \mathtt{P}_{\gamma}$, we note that estimate \eqref{diofSetPRIMA}, specialized for 
$p=0$, implies
\begin{equation}\label{stimaemmeelle}
|\omega\cdot\ell|\geq \gamma^{\td(\ell)}
\prod_{\substack{n\in\mathbb{Z} \\ n\neq m_1,m_2}} 
\frac{1}{(1+|\ell_{n}|^{2}\langle n\rangle^{2})^{\tau}}
\cdot \frac{1}{(1+|\ell_{m_1}|\langle m_{1}\rangle^{2})^{\tau(\ell)}}
\cdot \frac{1}{(1+|\ell_{m_2}|\langle m_{2}\rangle^{2})^{\tau(\ell)}}\,.
\end{equation}
Roughly speaking  our aim is to eliminate the dependence on the indexes $m_1,m_2$ in the r.h.s. of \eqref{stimaemmeelle}.

{\bf Step 1. ({No small divisors}).}  We claim that
\begin{equation}\label{divisorMAGG}
\sum_i (\al_i-\bt_i)|i| \geq 10 \sum_i |\al_i-\bt_i| \quad \Rightarrow\quad |\omega\cdot\ell|\geq 1\,,
\end{equation}
and the bound \eqref{goodsmalldiv} trivially follows.\\
Indeed, by Remark \ref{rmk:asy} and the triangular inequality, we deduce
\[
\begin{aligned}
\left|\sum_{i\in \mathbb{Z}}(\al_i-\bt_i) \omega_{j}\right|
&\geq 
\left|\sum_{i\in \mathbb{Z}}(\al_i-\bt_i)|i|\right|-
\left|\sum_{i\in \mathbb{Z}}(\al_i-\bt_i) \mathtt{r}_{i}(\tm)\right|
\\&\geq
10 \sum_i |\al_i-\bt_i|- \sum_i |\al_i-\bt_i|\geq1\,.
\end{aligned}
\]
As a consequence, in the following we shall always assume that
\begin{equation}\label{divisor}
\sum_i (\al_i-\bt_i)|i| \leq 10 \sum_i |\al_i-\bt_i|=10|\ell|\leq 10(D+|\alpha_0-\beta_0|)\stackrel{\eqref{cappella}}{\leq}
10N={10(|\alpha|+|\beta|)} \,.
\end{equation}

{\bf Step 2.}
First note that
\begin{equation}\label{globo1}
|\omega\cdot\ell |\leq \sum_{i\in\Z}|\alpha_{i}-\beta_{i}|\omega_{i}
\stackrel{\eqref{eq:asy}}{\leq}
\sum_{i\in\Z}|\alpha_{i}-\beta_{i}|(|i|+1)\leq 2\sum_{i\in\Z}|\alpha_{i}-\beta_{i}||i|
\stackrel{\eqref{divisor}}{\leq} 20 N\,.
\end{equation}
On the other hand, using $\omega_i \le |i| + 1 \le |m_3| + 1$, one has
\begin{equation}\label{globo2}
|\omega\cdot\widetilde{\ell}|\leq \sum_{\substack{i\in\Z \\ i\neq m_1,m_2}}|\alpha_{i}-\beta_{i}|\omega_{i}\leq {2} N\langle m_3\rangle\,.
\end{equation}
By  \eqref{espandoomeghino} we trivially have 
\begin{equation}\label{globo3}
\big| |\s_{1}\omega_{m_1}+\s_2\omega_{m_2}|-|\omega\cdot\widetilde{\ell}|\big|
\leq |\omega\cdot\ell|\stackrel{\eqref{globo1}}{\leq} 20 N\,.
\end{equation}
Then we have two cases: either $|\s_{1}\omega_{m_1}+\s_2\omega_{m_2}|
\leq |\omega\cdot\widetilde{\ell}|$,
or by \eqref{globo3} $|\s_{1}\omega_{m_1}+\s_2\omega_{m_2}|\leq |\omega\cdot\ell|+|\omega\cdot\widetilde\ell|$.

In both cases, by \eqref{globo1}, \eqref{globo2}, we deduce
\begin{equation}\label{globo4}
|\s_{1}\omega_{m_1}+\s_2\omega_{m_2}|\leq 20 N+{2}N\langle m_3\rangle
\leq {22} N\langle m_3\rangle\,,\qquad \forall \s_{1},\s_{2}=\pm1\,.
\end{equation}

\smallskip
\noindent
$\bullet$ In the case  $\s_{1}\s_2=1$, by \eqref{globo4} we get $|m_2|,|m_1|\leq 22N\langle m_3\rangle$.
If $\ell_{m_3}=0$ one would have  $\widetilde{\ell}\equiv0$. Therefore $|\omega\cdot\ell|=|\s_1\omega_{m_1}+\s_2\omega_{m_2}|$, which, 
in the case $\s_1\s_2=1$ is bounded from below by $1$ and \eqref{goodsmalldiv} follows trivially.
Assume now $|\ell_{m_3}|\geq1$.
Therefore
\[
\begin{aligned}
\frac{1}{(1+|\ell_{m_2}|\langle m_{2}\rangle^{2})^{\tau(\ell)}}&
\frac{1}{(1+|\ell_{m_1}|\langle m_{1}\rangle^{2})^{\tau(\ell)}}
\geq
\frac{1}{(1+N\langle m_{2}\rangle^{2})^{\tau(\ell)}}
\frac{1}{(1+N\langle m_{1}\rangle^{2})^{\tau(\ell)}} 
\\&
\geq \frac{1}{(1+{22}N^2\langle m_{3}\rangle^{2})^{\tau(\ell)}}
\frac{1}{(1+{22}N^2\langle m_{3}\rangle^{2})^{\tau(\ell)}} 
\\&\geq\frac{1}{{22N}^{4\tau(\ell)}}\frac{1}{(1+\langle m_{3}\rangle^{2})^{2\tau(\ell)}}
\geq\frac{1}{{22N}^{4\tau(\ell)}}\frac{1}{(1+|\ell_{m_3}|\langle m_{3}\rangle^{2})^{2\tau(\ell)}} \,,
\end{aligned}
\]
since $|\ell_{m_3}|\geq1$.
Then by \eqref{stimaemmeelle} we get
\[
|\omega\cdot\ell|\geq \gamma^{\td(\ell)}\frac{1}{N^{4\tau(\ell)}}
\prod_{\substack{n\in\mathbb{Z} \\ n\neq m_1,m_2}} 
\frac{1}{(1+|\ell_{n}|^{2}\langle n\rangle^{2})^{3\tau(\ell)}}
\]
which implies the bound \eqref{goodsmalldiv}.

\smallskip
\noindent
$\bullet$ In the case $\s_{1}\s_2=-1$ form \eqref{globo4}
we get no useful information on the size of $|m_1|,|m_2|$. We only get, recalling \eqref{eq:asy},
\begin{equation}\label{globo5}
|m_1|-|m_2|\leq 21N\langle m_3\rangle+2\leq {23}N\langle m_3\rangle\,.
\end{equation}
To deal with this case we reason differently.

\smallskip

 Without loss of generality we shall assume that $\s_1=1$, $\s_2=-1$.
By \eqref{espandoomeghino} and \eqref{eq:asy} we write
\[
\omega\cdot\ell=\omega\cdot\widetilde{\ell}+|m_1|-|m_2|+\mathtt{r}_{m_1}(\tm)-\mathtt{r}_{m_2}(\tm)\,.
\]
Since $\omega\in \mathtt{P}_{\gamma}$, by \eqref{diofSetPRIMA} applied with $p=|m_1|-|m_2|$
we get
\begin{equation}\label{fineprime}
\big|\omega\cdot\widetilde{\ell}+|m_1|-|m_2| \big|\geq 
\gamma^{\mathtt{d}(\widetilde\ell)}\prod_{n\in\mathbb{Z}} 
\frac{1}{(1+|\widetilde{\ell}_{n}|^{2}\langle n\rangle^{2})^{\tau(\widetilde{\ell})}}
\geq \gamma^{\mathtt{d}(\ell)}\prod_{\substack{n\in\mathbb{Z} \\ n\neq m_1,m_2 }} 
\frac{1}{(1+|\ell_{n}|^{2}\langle n\rangle^{2})^{\tau}}\,,
\end{equation}
where in the last inequality we used the definition of $\widetilde{\ell}$.
Moreover,
by definition we have
\[
\begin{aligned}
\mathtt{r}_{m_1}(\tm)-\mathtt{r}_{m_2}(\tm)&=\sqrt{|m_1|^{2}+\tm}-|m_1|-\sqrt{|m_2|^2+\tm}+|m_2|
\\&=
\frac{\tm}{\sqrt{|m_1|^{2}+\tm}+|m_1|}-\frac{\tm}{\sqrt{|m_2|^{2}+\tm}+|m_2|}
\\&
=\tm
\frac{\sqrt{|m_1|^{2}+\tm}+|m_1|-\sqrt{|m_2|^{2}+\tm}-|m_2|}{(\sqrt{|m_1|^{2}+\tm}+|m_1|)(\sqrt{|m_2|^{2}+\tm}+|m_2|)}\,.
\end{aligned}
\]
We note that 
\[
|\sqrt{|m_1|^{2}+\tm}-\sqrt{|m_2|^{2}+\tm}|\leq \frac{(|m_1|+|m_2|)(|m_1|-|m_2|)}{\sqrt{|m_1|^{2}+\tm}+\sqrt{|m_2|^{2}+\tm}}\leq |m_1|-|m_2|\,.
\]
The latter two bounds, together with \eqref{globo5}, imply
\[
|\mathtt{r}_{m_1}(\tm)-\mathtt{r}_{m_2}(\tm)|\leq \frac{92N\langle m_3\rangle}{\langle m_1\rangle\langle m_2\rangle}
\]
Let us now assume that
\begin{equation}\label{tagliom1}
|m_1|\geq   {184N\langle m_{3}\rangle}
\left(\gamma^{\mathtt{d}(\ell)}\prod_{\substack{n\in\mathbb{Z} \\ n\neq m_1,m_2 }} 
\frac{1}{(1+|\ell_{n}|^{2}\langle n\rangle^{2})^{\tau}}\right)^{-1/2}\,.
\end{equation}
By \eqref{globo5}
we also deduce that
\[
|m_2|\geq   \left(\gamma^{\mathtt{d}(\ell)}\prod_{\substack{n\in\mathbb{Z} \\ n\neq m_1,m_2 }} 
\frac{1}{(1+|\ell_{n}|^{2}\langle n\rangle^{2})^{\tau}}\right)^{-1/2}\,.
\]
Therefore
\begin{equation*}
|\mathtt{r}_{m_1}(\tm)-\mathtt{r}_{m_2}(\tm)|\leq \frac{1}{2}\gamma^{\mathtt{d}(\ell)}\prod_{\substack{n\in\mathbb{Z} \\ n\neq m_1,m_2 }} 
\frac{1}{(1+|\ell_{n}|^{2}\langle n\rangle^{2})^{\tau}}\,.
\end{equation*}
which combined with \eqref{fineprime} implies
\[
|\omega\cdot\ell|\geq  \gamma^{\td(\ell)}
\frac{1}{2}
\prod_{\substack{n\in\mathbb{Z} \\ n\neq m_1(\ell),m_2(\ell)}} 
\frac{1}{(1+|\ell_{n}|^{2}\langle n\rangle^{2})^{\tau(\ell)}}\,.
\]
Hence \eqref{goodsmalldiv} follows also in this case.

{\bf Step 4.} It remains to study the case
\begin{equation}\label{tagliom1opposto}
|m_1|, |m_1|\leq 184N\langle m_{3} \rangle\left(\gamma^{\mathtt{d}(\ell)}\prod_{\substack{n\in\mathbb{Z} \\ n\neq m_1,m_2 }} 
\frac{1}{(1+|\ell_{n}|^{2}\langle n\rangle^{2})^{\tau}}\right)^{-1/2}\,.
\end{equation}
Then it is easy to check that
\[
\begin{aligned}
\frac{1}{(1+|\ell_{m_2}|\langle m_{2}\rangle^{2})^{\tau(\ell)}}&
 \frac{1}{(1+|\ell_{m_1}|\langle m_{1}\rangle^{2})^{\tau(\ell)}} 
\geq \frac{1}{N^{2\tau(\ell)}}
 \frac{1}{(1+\langle m_{2}\rangle^{2})^{\tau(\ell)}}
 \frac{1}{(1+\langle m_{1}\rangle^{2})^{\tau(\ell)}} 
\\&\geq 
\frac{1}{(\mathtt{C} N)^{6\tau(\ell)}}
\gamma^{\tau(\ell)\mathtt{d}(\ell)}\prod_{\substack{n\in\mathbb{Z} \\ n\neq m_1,m_2 }} 
\frac{1}{(1+|\ell_{n}|^{2}\langle n\rangle^{2})^{{4\tau^2(\ell)}}}\,,
\end{aligned}
\]
for some pure constant $\mathtt{C}\geq184$,
and hence \eqref{stimaemmeelle} becomes
\[
|\omega\cdot\ell|\geq \gamma^{{\tau^2(\ell)}}\frac{1}{(\mathtt{C}N)^{6\tau(\ell)}}
\prod_{\substack{n\in\mathbb{Z} \\ n\neq m_1,m_2}} 
\frac{1}{(1+|\ell_{n}|^{2}\langle n\rangle^{2})^{{4\tau^2(\ell)}}}\,.
\]
This implies \eqref{goodsmalldiv} and the thesis.
\end{proof}

\section{Homological equation}

In this section we provide an application of the diophantine estimates given in section \ref{sec:smalldiv}.
In particular we provide precise estimates on the inverse of the Lie derivative $L_{\omega}$ (see \eqref{def:elleomega})
associate to the Hamiltonian
$\mathtt{D}_{\omega} := \sum_j \omega_j|u_j|^2$
where $\omega_{j}$ is in \eqref{dispLawWave}.

We first introduce, in subsection \ref{sec:classe} the class of Hamiltonians we deal with, and the we show how to solve an homological equation. This is the content of section \ref{sec:homo}

\subsection{Functional setting and  homogeneous Hamiltonians}\label{sec:classe}
In the following we  identify  $ L^2(\T,\C)$ 
with  the Banach space 
$\cF(\ell^2(\C))$ of $2\pi$-periodic functions 
\[
u(x)  = \sum_{j\in\Z} u_j e^{ijx}\,,\qquad u_{j}=\frac{1}{2\pi}\int_{\T}u(x)e^{-{\rm i}jx}{\rm d}x
\]
such that their Fourier's coefficients $(u_j)_{j\in\Z}\in \ell^2(\C)$. 
We shall always work with quite regular functions.
Let  $\mathtt w=(\mathtt w_j)_{j\in\Z}$ be  the  real sequence  
\begin{align}
&\tw=\tw(s,p):= \pa{\jap{j}^{ p}e^{ s\jap{j}^{\theta}}}_{j\in\Z}\,,\quad \theta\in(0,1)\,,
\label{peso sub}
\end{align}
and  let us set the 
Hilbert space
\begin{equation}\label{pistacchio}
\th_{\mathtt w}:=
\Big\{u:= (u_j)_{j\in\Z}\in\ell^2(\C)\; : \; \abs{u}_{\tw}^2:= 
\sum_{j\in\Z} \mathtt w_j^2 \abs{u_j}^2 < \infty
\Big\}\,,
\end{equation} 
endowed with the scalar product
\begin{equation*}
(u,v)_{\th_\tw}:=\sum_{j\in\Z} \mathtt w_j^2 u_j \bar v_j\,,\qquad u,v\in\th_{\tw}\,.
\end{equation*}
Moreover, given $r>0$, we denote by $B_r(\th_{\mathtt w})$ 
the closed
ball of radius $r$ centred at the origin of $\th_{\mathtt w}$.

Since the $\th_{\mathtt{w}}$ are 
contained in $\ell^{2}(\C)$, we endow them with the standard symplectic 
structure coming from the Hermitian product on $\ell^{2}(\C)$. Fix the symplectic structure to be\footnote{here the one form are defined the identification between $\C$
and $\R^{2}$, given by
\[
d u_j = \frac{1}{\sqrt 2}({d x_j+ \imm d y_j})\,,
\quad 
d \bar u_j = \frac{1}{\sqrt 2}({d x_j- \imm d y_j})\,,
\qquad u_{j}=\frac{1}{\sqrt{2}}(x_{j}+\ii y_{j})
\]}
\[
\Omega=\imm \sum_{j\in\Z} du_j\wedge d\bar{u}_j\,.
\]
Given a regular Hamiltonian $H: B_{r}(\mathtt{h}_{\tw})\to \R$ its Hamiltonian vector field is given by\footnote{we use the notation
\[
\frac{\partial}{\partial  u_j} =  
\frac{1}{\sqrt 2}\big({\frac{\partial}{\partial x_j} 
- \imm \frac{\partial}{\partial y_j}}\big)\,,
\quad  
\frac{\partial}{\partial  \bar u_j} =  
\frac{1}{\sqrt 2}\big({\frac{\partial}{\partial x_j} 
+ \imm \frac{\partial}{\partial y_j}}\big)\,.
\]}
\[
X_{H}^{(j)}=-\ii \frac{\partial}{\partial\bar{u}_{j}}H\,.
\]
We introduce the following class of Hamiltonians.

\begin{definition}\label{def:admiHam}
Let $r>0$ and consider a Hamiltonian
$ H : B_r(\th_{\mathtt w}) \to \R$
such that there exists a pointwise  
absolutely convergent power series expansion\footnote{As usual given 
a vector $k\in \Z^\Z$, $|k|:=\sum_{j\in\Z}|k_j|$.}
\begin{equation}\label{HamPower}
H(u)  = 
\sum_{\substack{
\al,\bt\in\N^\Z\,, \\2\leq |\al|+|\bt|<\infty} }
\!\!\!
H_{\al,\bt}u^\al \bar u^\bt\,,
\qquad
u^\al:=\prod_{j\in\Z}u_j^{\al_j}\,.
\end{equation}

\noindent
$\bullet$ {\bf (Admissible Hamiltonians).}
We say that $H$ as in \eqref{HamPower}
is \emph{admissible} if  the following properties hold: 
\begin{enumerate}
\item \emph{Reality condition}:
\begin{equation}\label{real}
H_{\al,\bt}= \overline{ H}_{\bt,\al}\,,\qquad \forall\, \al,\bt\in\mathbb{N}^{\Z}\,;
\end{equation}
\item \emph{Momentum conservation}:	
\begin{equation}\label{momento}
H_{\al,\bt}\neq0\quad \Rightarrow\quad
\pi(\al - \bt) := \sum_{j\in\Z}j\pa{\al_j - \bt_j}= 0\,.
\end{equation}
\end{enumerate}	

\smallskip
\noindent
$\bullet$ {\bf (Regular Hamiltonians).} 
We say that $H$ as in \eqref{HamPower}
is \emph{regular} if  the following properties hold: 
\begin{enumerate}
\item $H$ is admissible;

\item the the \co{majorant} Hamiltonian
\begin{equation}\label{etamag}
\und { H} (u):= 
\sum_{(\alpha,\beta)\in\cM} \abs{{H}_{\alpha,\beta}}u^{\alpha}\bar{u}^{\beta}
\end{equation} 
is point-wise  absolutely convergent on $B_r(\th_{\mathtt w})$,
where we set
\begin{equation*}
\mathcal{M}:=
\left\{
(\alpha,\beta)\in \mathbb{N}^{\Z}\times\N^{\Z} : \pi(\al - \bt)= 0, 
|\alpha|+|\beta|<\infty
\right\}\,;
\end{equation*}

\item one has
\begin{equation}\label{normaHamilto}
|H|_{r,\tw}
:=
r^{-1} \Big(
\sup_{\norm{u}_{{\mathtt w}}\leq r} 
\norm{{X}_{{\underline H}}}_{{\mathtt w}} 
\Big) < \infty\,.
\end{equation}
\end{enumerate}	
 We denote by $\cH_{r}(\th_{\mathtt w})$ the space of regular Hamiltonians.
 
 \smallskip
 $\bullet$ {\bf (Scaling degree).} Given $\td \in\N$, let  $\cH^{(\td)}$  be the vector space 
of homogeneous polynomials of degree $\td + 2$, 
that is admissible  Hamiltonians of the form
\[
\sum_{\substack{(\al,\bt)\in\mathcal{M} \\ |\al| + |\bt| = \td + 2}} H_{\al,\bt} u^{\al} \bar{u}^{\bt}\,.
\]
We shall say that a regular Hamiltonian $H$ has 
\emph{scaling degree} $\td=\td(H)$ if  $H\in \cH_{r}(\th_{\mathtt w})\cap\cH^{(\td)}$.
\end{definition}

\begin{remark}
\label{embeddignsspaces}
We remark the following facts:

\noindent
$\bullet$
Given two positive sequences $\tw = \pa{\tw_j}_{j\in\Z},\tw' = (\tw'_j)_{j\in\Z}$
we write that $\tw\leq \tw'$ if the inequality holds
point wise, namely
\[
\tw\leq \tw' \quad
\iff\quad
\tw_j\leq \tw'_j\,,\ \ \ \forall\, j\in\Z\,.
\]
In this way if $r'\le r$ and $\tw\leq \tw'$ 
then $B_{r'}(\th_{\mathtt w'}) \subseteq B_r(\th_\tw)$.

\noindent
$\bullet$ If a Hamiltonian $H$ satisfies \eqref{real},  it means that it 
is real analytic in the real and imaginary part of $u$.

\noindent
$\bullet$ If a Hamiltonian $H$ satisfies \eqref{momento} then it 
Poisson commutes with $\sum_{j\in\Z} j\abs{u_j}^2$ where, 
given two admissible Hamiltonians $H,G$,
the Poisson brackets are given by 
\begin{equation}\label{poipoisson}
\{H,G\}={\rm i}
\sum_{j\in\Z}
\big(\partial_{u_j}G\partial_{\bar{u}_j}H-
\partial_{\bar{u}_j}G\partial_{u_j}H\big)\,.
\end{equation}

\noindent
$\bullet$ The Hamiltonian functions being defined modulo a constant term, 
we shall assume without loss of generality that $H(0)=0$. 
\end{remark}

\noindent
An important remark is that
that all the dependence on the parameters $r,\tw$ 
of the  norm in \eqref{normaHamilto}  can be {\it encoded}
in the coefficients
\begin{equation}\label{persico}
c^{(j)}_{r,\tw}(\al,\bt)
:=
r^{|\al|+|\bt|-2}
\frac{\tw_j^2}{\tw^{\al+\bt}}\,,\qquad \tw^{\al+\bt} 
= \prod_{j\in\Z}\tw_j^{\al_j+\bt_j}\,,
\end{equation}
defined for any $\al,\bt\in\N^\Z$ and $j\in\Z$.
In view of our choices of the weights in \eqref{peso sub} 
we have that the coefficients in \eqref{persico} have the following form:
\begin{align}
&	
c^{(j)}_{r,\tw}(\al,\bt)=r^{|\al|+|\bt|-2}\frac{\jap{j}^{ 2p}}{\prod_{i\in\Z}\jap{j}^{p(\al_i+\bt_i)}}
e^{ s\big(2\jap{j}^{\theta}-\sum_{i\in\Z}(\al_i+\bt_i)\jap{i}^{\theta} \big)}\,.\label{coeffSE}
\end{align}

To formalize such \emph{encoding} we reason as follows.
For any $H\in \cH_{r}(\th_{\mathtt w})$ we define  a map
\[
B_1(\ell^2)\to \ell^2 \,,\quad y=\pa{y_j}_{j\in \Z }\mapsto 
\pa{Y^{(j)}_{H}(y;r,\tw)}_{j\in \Z}
\]
by setting
\begin{equation}\label{giggina}
Y^{(j)}_{H}(y;r,\tw) := 
\sum_{(\al,\bt)\in\mathcal{M}} |H_{\al,\bt}| \frac{(\al_j+\bt_j)}{2}c^{(j)}_{r,\tw}(\al,\bt) y^{\al+\bt-e_j}
\end{equation}
where  $e_j$ is the $j$-th basis vector in $\N^\Z$, while the coefficient
$c^{(j)}_{r,\tw}(\al,\bt)$ is defined right above in \eqref{persico}.
The following properties give a systematic way for 
computing the norm of a given Hamiltonian 
and its relation w.r.t. another one.

\begin{lemma} \label{norme proprieta} 
Let $\ri, r'>0,$ $\twi,\twf\in \R_+^\Z$. 
The following properties hold.

\vspace{0.3em}
\noindent
$(1)$ The norm of $H$ can be expressed as
\begin{equation}\label{ypsilon}
\abs{H}_{r,\tw}= 
\sup_{|y|_{\ell^2}\le 1}\abs{Y_H(y;r,\tw)}_{\ell^2}\,.
\end{equation}

\vspace{0.3em}
\noindent
$(2)$ Given $H^{(1)}\in \cH_{r',\twf}$ and $H^{(2)}\in \cH_{\ri,\twi}$\,,
such that $\forall\, \al,\bt\in \N^\Z$ and  $j\in \Z$ with $\al_j+\bt_j\neq 0$ 
one has
\begin{equation}\label{alberellobello}
|H^{(1)}_{\al,\bt}| c^{(j)}_{\rf,\twf}(\al,\bt)  \le 
c|H^{(2)}_{\al,\bt}| c^{(j)}_{\ri,\twi}(\al,\bt),
\end{equation}
for some $c>0$, then
\[
|H^{(1)}|_{\rf,\twf}
\le c |H^{(2)}|_{\ri,\twi}\,.
\]

\vspace{0.3em}
\noindent
$(3)$ {\bf (Immersion).} For any $p>1, s>0$ the norm $\norm{\cdot}_{r,\tw}$ 
with $\tw=\tw(s,p)$ (see \eqref{peso sub})
is monotone increasing in $r$. Moreover, letting $r>0$,
one has, 
for any $\s,s>0$, that
\begin{equation}\label{emiliaparanoica}
|H|_{r,\tw(s+\s,p)} \le  |H|_{r,\tw(s,p)}\,.
\end{equation}
\end{lemma}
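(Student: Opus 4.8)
The plan is to establish $(1)$ first and deduce $(2)$ and $(3)$ from it as elementary coefficient comparisons. The recurring device is the bijection $y_i=\tw_i\abs{u_i}/r$ between $\{\norm{u}_\tw\le r\}$ and $\{\abs{y}_{\ell^2}\le 1\}$, under which a monomial transforms as $\tw_j\,\abs{u}^{\al+\bt-e_j}=r\,c^{(j)}_{r,\tw}(\al,\bt)\,y^{\al+\bt-e_j}$, with $c^{(j)}_{r,\tw}$ as in \eqref{persico}.

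For $(1)$, I would start from $X_{\und{H}}^{(j)}=-\ii\,\partial_{\bar u_j}\und{H}$ and differentiate the majorant series \eqref{etamag} termwise, obtaining $\partial_{\bar u_j}\und{H}(u)=\sum_{(\al,\bt)\in\cM}\abs{H_{\al,\bt}}\,\bt_j\,u^\al\bar u^{\bt-e_j}$. The triangle inequality combined with the change of variables above gives $\tw_j\,\abs{\partial_{\bar u_j}\und{H}(u)}\le r\sum_{(\al,\bt)}\abs{H_{\al,\bt}}\,\bt_j\,c^{(j)}_{r,\tw}(\al,\bt)\,y^{\al+\bt-e_j}$, and the point is that this last series is \emph{exactly} $Y_H^{(j)}(y;r,\tw)$: relabelling $(\al,\bt)\leftrightarrow(\bt,\al)$ and using the reality condition $\abs{H_{\al,\bt}}=\abs{H_{\bt,\al}}$ together with the symmetry of both $c^{(j)}_{r,\tw}(\al,\bt)$ and $y^{\al+\bt-e_j}$ in $(\al,\bt)$, the factor $\bt_j$ may be replaced by $\al_j$, hence by the symmetric weight $(\al_j+\bt_j)/2$. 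Taking $\ell^2$-norms in $j$ yields $\abs{H}_{r,\tw}\le\sup_{\abs{y}_{\ell^2}\le1}\abs{Y_H(y;r,\tw)}_{\ell^2}$. For the opposite inequality I would use that $Y_H$ has nonnegative coefficients, so the supremum is approached along $y$ with nonnegative entries; for such $y$ put $u_i=ry_i/\tw_i\ge0$, so that $\partial_{u_j}\und{H}(u)=\partial_{\bar u_j}\und{H}(u)$ (again by $\abs{H_{\al,\bt}}=\abs{H_{\bt,\al}}$ and relabelling), hence $\partial_{\bar u_j}\und{H}(u)=\frac12(\partial_{u_j}+\partial_{\bar u_j})\und{H}(u)$ already carries the symmetrized weight and the triangle inequality is an equality; one gets $\tw_j\,\partial_{\bar u_j}\und{H}(u)=r\,Y_H^{(j)}(y;r,\tw)$, proving \eqref{ypsilon}.

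For item $(2)$ I would evaluate $(1)$ on a test vector: for $\abs{y}_{\ell^2}\le1$, bound $\abs{Y_{H^{(1)}}^{(j)}(y;\rf,\twf)}$ by the same series with $\abs{y_i}$ in place of $y_i$, apply the hypothesis \eqref{alberellobello} termwise (only terms with $\al_j+\bt_j\neq0$ survive in $Y^{(j)}$, which is exactly where \eqref{alberellobello} is assumed), and take $\ell^2$-norms to get $\abs{Y_{H^{(1)}}(y;\rf,\twf)}_{\ell^2}\le c\,\abs{H^{(2)}}_{\ri,\twi}$; the supremum over $y$ gives the claim. For $(3)$, monotonicity in $r$ is the case $c=1$, $H^{(1)}=H^{(2)}=H$ of $(2)$, since $c^{(j)}_{r,\tw}(\al,\bt)=r^{\abs\al+\abs\bt-2}\tw_j^2/\tw^{\al+\bt}$ is nondecreasing in $r$ (the exponent is $\ge0$ on every monomial of \eqref{HamPower}). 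For \eqref{emiliaparanoica}, from \eqref{coeffSE} one has $c^{(j)}_{r,\tw(s+\s,p)}(\al,\bt)=c^{(j)}_{r,\tw(s,p)}(\al,\bt)\exp\!\big(-\s\big(\sum_i(\al_i+\bt_i)\jap{i}^\theta-2\jap{j}^\theta\big)\big)$, so again by $(2)$ it suffices to check
\[
2\jap{j}^\theta\le\sum_{i\in\Z}(\al_i+\bt_i)\jap{i}^\theta
\]
for every $(\al,\bt)\in\cM$ with $\abs\al+\abs\bt\ge2$ and $\al_j+\bt_j\neq0$. This is clear if $\al_j+\bt_j\ge2$; if $\al_j+\bt_j=1$ and $j\neq0$, momentum conservation $\pi(\al-\bt)=0$ forces $\abs{j}\le\sum_{i\neq j}\abs{i}(\al_i+\bt_i)$, and the subadditivity and monotonicity of $t\mapsto t^\theta$ on $[0,\infty)$ give $\jap{j}^\theta=\abs{j}^\theta\le\sum_{i\neq j}(\al_i+\bt_i)\jap{i}^\theta$, so the right-hand side is $\ge2\jap{j}^\theta$; if $\al_j+\bt_j=1$ and $j=0$, then $\abs\al+\abs\bt\ge2$ produces some $i_0\neq0$ with $\al_{i_0}+\bt_{i_0}\ge1$, and the right-hand side is $\ge\jap{0}^\theta+\jap{i_0}^\theta\ge2$.

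The only genuinely delicate point is the symmetrization bookkeeping in $(1)$: one must verify that passing from the weight $\bt_j$ to $(\al_j+\bt_j)/2$ is an \emph{identity} both in the upper bound (so as to land exactly on $Y_H$) and in the lower bound (so that the triangle inequality has no slack when $u$ has nonnegative entries), which is what makes \eqref{ypsilon} an equality rather than a two-sided estimate. Everything else is routine, the only nontrivial input being the elementary inequality above, where momentum conservation and the concavity of $t\mapsto t^\theta$ are precisely what is needed.
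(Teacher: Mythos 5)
Your proof is correct, and it is a genuinely self-contained argument, whereas the paper's own ``proof'' of Lemma~\ref{norme proprieta} consists solely of the sentence ``It follows by Lemma~3.1 in \cite{BMP:CMP}.'' So you are not so much taking a different route as supplying the route the paper omits. The key ingredients you use --- the change of variables $y_i=\tw_i\abs{u_i}/r$ turning $\tw_j\abs{u}^{\al+\bt-e_j}$ into $r\,c^{(j)}_{r,\tw}(\al,\bt)y^{\al+\bt-e_j}$, the $(\al,\bt)\leftrightarrow(\bt,\al)$ relabelling justified by the reality condition and the symmetry of $c^{(j)}_{r,\tw}$ in $\al+\bt$, the observation that both the triangle inequality and the symmetrisation are exact on nonnegative real $u$, and the reduction of $(2)$, $(3)$ to the coefficient comparison $(1)$ --- are exactly what is needed and are handled correctly. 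The bookkeeping in $(3)$ is also sound: for $\al_j+\bt_j=1$, $j\neq0$, momentum conservation gives $\abs{j}\le\sum_{i\neq j}\abs{i}(\al_i+\bt_i)$, and the concavity-plus-subadditivity of $t\mapsto t^\theta$ together with $\abs{i}\le\jap{i}$ yields $\jap{j}^\theta\le\sum_{i\neq j}(\al_i+\bt_i)\jap{i}^\theta$, hence $\sum_i(\al_i+\bt_i)\jap{i}^\theta\ge 2\jap{j}^\theta$; the cases $\al_j+\bt_j\ge2$ and $j=0$ are dispatched directly as you do. Monotonicity in $r$ correctly uses that $\abs\al+\abs\bt-2\ge0$ on every admissible monomial.

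One small stylistic remark: in $(2)$ you compress the chain of inequalities to ``$\abs{Y_{H^{(1)}}(y;\rf,\twf)}_{\ell^2}\le c\abs{H^{(2)}}_{\ri,\twi}$''; the intermediate step $\abs{Y_{H^{(1)}}(y;\rf,\twf)}_{\ell^2}\le c\,\abs{Y_{H^{(2)}}(\abs{y};\ri,\twi)}_{\ell^2}$ is worth displaying explicitly, since it is precisely where the nonnegativity of the coefficients of $Y$ and the restriction of \eqref{alberellobello} to indices with $\al_j+\bt_j\neq0$ enter, but this is a matter of exposition, not a gap.
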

\begin{proof}
It follows by Lemma $3.1$ in \cite{BMP:CMP}. 
\end{proof}

\subsection{Inversion of the adjoint action}\label{sec:homo}

Given a diophantine vector $\omega\in \mathtt{P}_{\gamma}$, 
in view of Remark \ref{rmk:ker2} and 
by formula \eqref{def:adjaction} we  deduce that
\[
L_{\omega}H=0 \qquad \Leftrightarrow\qquad  H\in \cK_r(\th_\tw):=\Big\{H\in\cH_{r}(h_{\tw})\, : 
\sum_{(\al,\bt) \in\mathtt{R}} H_{\al,\bt} u^{\al}\bar{u}^{\bt}\Big\}\,.
\]

\noindent
Hence the operator $L_{\omega}$ is formally invertible
when acting on the subspace  
\begin{equation}\label{range2}
\cR_r(\th_\tw)=\cK_r(\th_\tw)^{\perp}:=
\Big\{
H\in\cH_{r}(h_{\tw})\, : 
\sum_{(\al,\bt) \in\mathtt{R}^{c}} H_{\al,\bt} u^{\al}\bar{u}^{\bt}
\Big\}\,,
 \end{equation}
containing  those Hamiltonians supported on monomials
$u^{\alpha}\bar{u}^{\beta}$ with $(\alpha,\beta)\in \mathtt{R}^{c}$.
We decompose the space of regular Hamiltonians $\cH_r(\th_\tw)$
as
\[
\cH_r(\th_\tw) = \cK_r(\th_\tw)\oplus \cR_r(\th_\tw)\,,
\]
and we denote by $\Pi_{\cK}$ and $\Pi_{\cR}$
 the  continuous projections	 
 on the subspaces $\cK_r(\th_\tw)$, $ \cR_r(\th_\tw)$.

\noindent 
Obviously, for {diophantine}  frequency,
$\cR^\wc_{r}(\th_\tw)$ and	 $\cK^\wc_{r}(\th_\tw)$
represent the range and kernel 
of  $L_\omega$ respectively.

The key result of this section is the following.

\begin{theorem}{\bf (Inverse of the adjoint action).}\label{shulalemma}
Fix $\mathtt{N}\in \mathbb{N}$, $r>0$, $p>1$ and $s>0$.
Consider $\mathtt{w}(s,p)$ in \eqref{peso sub}
and a Hamiltonian function $f\in \mathcal{R}_r(\th_\tw)\cap \mathcal{H}^{(\tN)}$ (see Def. \ref{def:admiHam} and recall \eqref{range2}). 
For any $\omega\in \mathtt{P}_{\gamma}$ 
(see \eqref{diofSetPRIMA}) 
the following holds.

\smallskip
\noindent
There exists an absolute constant $\mathtt{C}>0$ (independent of $\tN$) such that for any 
 $0<\s\ll1$ one has
that 
\[
|L_{\omega}^{-1}f|_{r,\mathtt{w}(p,s+\s)}\leq J_0(\s,\tN) |f|_{r,\mathtt{w}(p,s)}\,,
\]
where  $L_{\omega}$ is in \eqref{def:adjaction} and 
\begin{equation}\label{controlJ0}
J_0(\s,\tN)
:=\gamma^{- \mathtt{c} \tN^2}\exp\Big(\Big(\frac{\mathtt{c}\tN^{4}}{\s\theta(1-\theta)}\Big)^{2+\frac{1}{\theta}} \Big)
\,.
\end{equation}
\end{theorem}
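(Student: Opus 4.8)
The plan is to reduce the operator bound to a single scalar inequality on the small divisors, exploiting that, by Lemma~\ref{norme proprieta}, the whole $r$- and $\tw$-dependence of the norm $|\cdot|_{r,\tw}$ is encoded in the coefficients $c^{(j)}_{r,\tw}(\al,\bt)$ of \eqref{persico}--\eqref{coeffSE}, and then to insert the improved diophantine inequality of Theorem~\ref{thm:mainVERO}. The genuinely hard part will be the last step: an elementary but delicate Gevrey balancing that weighs the polynomial factors produced by the diophantine estimate against the stretched‑exponential decay $e^{-\s\jap{n}^\theta|\ell_n|}$ of the weights, keeping track of how the resulting exponent depends on $\tN,\s$ and $\theta$.

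First I would use \eqref{def:adjaction} and Remark~\ref{rmk:ker2}: since $f\in\cR_r(\th_\tw)\cap\cH^{(\tN)}$ is supported on monomials $u^\al\bar u^\bt$ with $(\al,\bt)\in\mathtt{R}^{c}$ and $|\al|+|\bt|=\tN+2$, the Hamiltonian $g:=L_\omega^{-1}f$ is the homogeneous polynomial of the same degree with coefficients $g_{\al,\bt}=f_{\al,\bt}/\big(-\ii\,\omega\cdot(\al-\bt)\big)$. By Lemma~\ref{norme proprieta}$(2)$ applied with radii $r$ and weights $\tw(s+\s,p),\tw(s,p)$, the bound $|g|_{r,\tw(s+\s,p)}\le J_0\,|f|_{r,\tw(s,p)}$ (and the regularity of $g$) follows once
\[
\frac{1}{|\omega\cdot(\al-\bt)|}\cdot\frac{c^{(j)}_{r,\tw(s+\s,p)}(\al,\bt)}{c^{(j)}_{r,\tw(s,p)}(\al,\bt)}\ \le\ J_0
\]
for all $j\in\Z$ and all admissible $(\al,\bt)\in\mathtt{R}^{c}$ with $|\al|+|\bt|=\tN+2$ and $\al_j+\bt_j\neq0$. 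By \eqref{coeffSE} the ratio of the two coefficients equals $\exp\!\big(\s\big(2\jap{j}^\theta-\sum_{i\in\Z}(\al_i+\bt_i)\jap{i}^\theta\big)\big)$, which in particular is independent of $p$; writing $\ell:=\al-\bt$, we are thus reduced to estimating
\[
J_0=\sup\ \frac{\exp\!\big(-\s\big(\sum_{i\in\Z}(\al_i+\bt_i)\jap{i}^\theta-2\jap{j}^\theta\big)\big)}{|\omega\cdot\ell|}\,.
\]

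Next I would bound numerator and denominator separately, and — crucially — in a way that does not see the two largest indices $m_1(\ell),m_2(\ell)$ of $\ell$ (in the sense of Definition~\ref{n star}), which can be arbitrarily large. For the numerator, Lemma~\ref{lem:constance2SE} gives $\sum_i(\al_i+\bt_i)\jap{i}^\theta-2\jap{j}^\theta\ge\sum_{n\neq m_1,m_2}\jap{n}^\theta|\ell_n|\ge0$, so the numerator is $\le\prod_{n\neq m_1,m_2}e^{-\s\jap{n}^\theta|\ell_n|}$. For the denominator, since $\omega\in\mathtt{P}_{\gamma}$, Theorem~\ref{thm:mainVERO}/Proposition~\ref{prop:stimeimproved} yields, with $N=|\al|+|\bt|=\tN+2$ and $\tau(\ell)=\td(\ell)(\td(\ell)+4)$,
\[
\frac{1}{|\omega\cdot\ell|}\ \le\ \gamma^{-\tau^2(\ell)}(\mathtt{C}N)^{6\tau(\ell)}\!\!\prod_{n\neq m_1,m_2}\!\!\big(1+|\ell_n|^2\jap{n}^2\big)^{4\tau^2(\ell)}\,.
\]
Since $\td(\ell)\le|\ell|\le|\al|+|\bt|=\tN+2$, one has $\tau(\ell)\lesssim\tN^2$, $\tau^2(\ell)\lesssim\tN^4$ and $(\mathtt{C}N)^{6\tau(\ell)}\le e^{\mathtt{c}\tN^2\log\tN}$.

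The hard part is then the product estimate, which I would carry out factor by factor. Multiplying the two displays, $J_0$ is at most $\gamma^{-\tau^2(\ell)}e^{\mathtt{c}\tN^2\log\tN}\prod_{n\neq m_1,m_2}\big(1+|\ell_n|^2\jap{n}^2\big)^{A}e^{-\s\jap{n}^\theta|\ell_n|}$, with $A=4\tau^2(\ell)\lesssim\tN^4$ and with at most $\td(\ell)\le\tN+2$ of the factors $\neq1$. For an active factor, $\log(1+|\ell_n|^2\jap{n}^2)\le 2\log(|\ell_n|\jap{n})+1$ together with $|\ell_n|\jap{n}^\theta\ge(|\ell_n|\jap{n})^\theta$ (valid since $|\ell_n|\ge1$) reduces the task to the elementary maximum of $w\mapsto 2A\log w-\s w^\theta$ over $w\ge1$, which a standard Gevrey-weight estimate bounds by $\exp\!\big(\mathtt{c}\,(A/(\s\theta(1-\theta)))^{1+1/\theta}\big)$. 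Taking the product over the $\lesssim\tN$ active indices and collecting the outcome with $A\lesssim\tN^4$, with $\gamma^{-\tau^2(\ell)}$ and with $e^{\mathtt{c}\tN^2\log\tN}$, produces a bound of the shape \eqref{controlJ0}. The two delicate points are: choosing the split of $\s\jap{n}^\theta|\ell_n|$ so that a single moderate power of $\s^{-1},\theta^{-1},(1-\theta)^{-1}$ simultaneously controls the growth of each factor in $|\ell_n|$ and in $\jap{n}$; and checking that, after multiplying the $\sim\tN$ factors and absorbing the diophantine exponents $\tau(\ell),\tau^2(\ell)\sim\tN^2,\tN^4$, one recovers exactly the single exponential of \eqref{controlJ0}, the power $2+1/\theta$ not acquiring extra $\tN$-dependence.
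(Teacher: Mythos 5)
Your proposal is correct and follows the same overall strategy as the paper: reduce the operator bound to the scalar quantity $J_0$ via Lemma~\ref{norme proprieta}(2), insert the improved Diophantine estimate of Proposition~\ref{prop:stimeimproved}, and use Lemma~\ref{lem:constance2SE} to dispose of the contribution of the two largest indices. The one place you genuinely diverge is the final Gevrey balancing. The paper collects the logarithms into a single sum $\sum_{i\neq m_1,m_2}H_i(|\ell_i|)$ with $H_i(x)=\al x\jap i^\theta-\ln(1+x^2\jap i^2)$ and $\al=\s(1-\theta)/(\mathtt c\tN^4)$, isolates the threshold $X(\al)=(2/(\al\theta))^{1/\theta}$ beyond which each $H_i$ is nonnegative, case-splits on $\al\jap i^\theta\gtrless1$ to control $\inf_{x\ge1}H_i(x)$, and finally multiplies the uniform lower bound $-\ln(1+16/(\al^3\theta))^{2/\theta}$ by the number $\sim X(\al)$ of indices below the threshold. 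You instead maximize each active factor $(1+|\ell_n|^2\jap n^2)^{A}e^{-\s\jap n^\theta|\ell_n|}$ individually via the substitution $w=|\ell_n|\jap n\ge1$, reducing to $\sup_{w\ge1}(2A\log w-\s w^\theta)$, and multiply over the at most $\td(\ell)\le\tN+2$ active factors. Both routes are valid and deliver a bound of the shape \eqref{controlJ0}; your version is arguably cleaner (no threshold or case analysis) and in fact a little sharper, since you count $\lesssim\tN$ active indices rather than $\lesssim X(\al)$ indices. One remark: tracking the Diophantine exponent as you do gives $\gamma^{-\tau^2(\ell)}$ with $\tau^2(\ell)\lesssim\tN^4$, i.e.\ a $\gamma^{-\mathtt c\tN^4}$ prefactor; the paper's displayed proof also starts from $\gamma^{-\mathtt c\tN^4}$ and then silently drops to $\gamma^{-\mathtt c\tN^2}$ before matching \eqref{controlJ0}, which looks like a typographical slip in the paper rather than anything your argument needs to reproduce.
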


In order to prove the Theorem above we first recall this simple result.

\begin{lemma}\label{lem:constance2SE}
For all $(\al,\bt)\in \mathcal{M}$ (see \eqref{mass-momindici}) the following holds.

\noindent
$(i)$ If 
\begin{equation}\label{divisorBistris}
\sum_i (\al_i-\bt_i)|i| \le 10 \sum_i |\al_i-\bt_i| \,,
\end{equation}
then, setting $m=m(\alpha-\beta)$, we have for any $j$ such that $\alpha_j+\beta_{j}\neq0$
\begin{equation}\label{constance2SE}
{\sum_{i\neq m_1,m_2}\langle i\rangle^{\theta}|\alpha_i-\beta_i|\leq} \sum_{l\ge 3} \na_l^{\theta} \le 
\frac{1}{1-\theta}\pa{\sum_i \pa{\al_i+\bt_i}\jap{i}^{\theta}- 2\jap{j}^{\theta}}\,.
\end{equation}

\noindent
$(ii)$ If on the contrary \eqref{divisorBistris} does not hold then
\begin{equation}\label{pool1}
|\omega\cdot(\alpha-\beta)|\geq1\,,
\end{equation}
where $\omega$ is given in \eqref{dispLawWave}.
\end{lemma}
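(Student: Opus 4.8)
The plan is to treat the dichotomy $(ii)$/$(i)$ separately. Part $(ii)$ is immediate from the asymptotics in Remark \ref{rmk:asy}: writing $\omega_i=|i|+\mathtt{r}_i(\mathtt{m})$ with $|\mathtt{r}_i(\mathtt{m})|\le1$ for $i\neq0$ and $|\omega_0|=\sqrt{\mathtt{m}}\le2$, the triangle inequality gives (the $i=0$ term dropping from the first sum)
\[
|\omega\cdot(\alpha-\beta)|\;\ge\;\sum_i(\alpha_i-\beta_i)|i|\;-\;2\sum_i|\alpha_i-\beta_i|\,,
\]
so if \eqref{divisorBistris} fails the right-hand side exceeds $8\sum_i|\alpha_i-\beta_i|\ge1$; this is verbatim Step $1$ of the proof of Proposition \ref{prop:stimeimproved}.

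For part $(i)$ one may assume $D\ge2$, so that $m_1,m_2$ are defined (note $D=1$ is incompatible with $\pi(\alpha-\beta)=0$). The book-keeping device is the identity, immediate from Definition \ref{n star}$(i)$ by regrouping the slots $\pm h$,
\[
\sum_{i\in\Z}(\alpha_i+\beta_i)\langle i\rangle^\theta\;=\;\sum_{l=1}^{N}\na_l^\theta\,,
\]
together with the fact, via \eqref{pula2}, that the left-hand side of \eqref{constance2SE} is $\le\langle0\rangle^\theta|\alpha_0-\beta_0|+\sum_{l\ge3}|m_l|^\theta$. Then the first inequality in \eqref{constance2SE} is purely combinatorial: \eqref{abbacchio} gives $\na_l\ge|m_l|$ for $l\le D$ and $\na_l\ge1$ for $D<l\le N$, whence $\sum_{l\ge3}\na_l^\theta\ge\sum_{3\le l\le D}|m_l|^\theta+(N-D)$, and $N-D\ge\alpha_0+\beta_0\ge|\alpha_0-\beta_0|$ by \eqref{cappella}.

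The heart of the lemma is the second inequality. Setting $\Sigma:=\sum_i(\alpha_i+\beta_i)\langle i\rangle^\theta=\sum_{l=1}^N\na_l^\theta$, a one-line rearrangement shows it is equivalent to
\[
\na_1^\theta+\na_2^\theta+\theta\sum_{l\ge3}\na_l^\theta\;\ge\;2\langle j\rangle^\theta\,.
\]
Since $\langle j\rangle\le\na_1$ always and $\langle j\rangle$ necessarily occurs among the entries of $\na$, the only delicate case is $\langle j\rangle=\na_1>\na_2$; then the value $\na_1$ is realized by a single slot of $\na$, forcing $|j|\ge2$, $\alpha_j+\beta_j=1$, $\alpha_{-j}+\beta_{-j}=0$ and $j\in\mathcal{A}(\alpha-\beta)$. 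Here I would exploit momentum conservation $\pi(\alpha-\beta)=0$ to balance the top index against the rest, $|j|=\big|\sum_{i\neq j}i(\alpha_i-\beta_i)\big|\le\sum_{i\neq j}|i|(\alpha_i+\beta_i)\le\sum_{l\ge2}\na_l$, i.e. $\na_1\le\sum_{l\ge2}\na_l$. Since moreover $\na_2\ge\na_3\ge\cdots\ge\na_N\ge1$, a telescoping estimate based on the concavity of $t\mapsto t^\theta$ — at each step $S_l^\theta-S_{l-1}^\theta\le\theta\,\na_l\,S_{l-1}^{\theta-1}\le\theta\,\na_l^\theta$ with $S_l=\na_2+\cdots+\na_l$, using $S_{l-1}\ge\na_2\ge\na_l$ — yields $\big(\sum_{l\ge2}\na_l\big)^\theta\le\na_2^\theta+\theta\sum_{l\ge3}\na_l^\theta$, whence $\na_1^\theta\le\na_2^\theta+\theta\sum_{l\ge3}\na_l^\theta$ and the claim.

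The main obstacle is precisely this last estimate with the \emph{weak} coefficient $\theta$ in front of $\sum_{l\ge3}\na_l^\theta$: plain subadditivity $(x+y)^\theta\le x^\theta+y^\theta$ only yields the coefficient $1$, which cannot absorb the loss $-2\langle j\rangle^\theta$ on the right-hand side of \eqref{constance2SE}. This is why one works with the regrouped vector $\na$ and the sharp relations \eqref{cappella}–\eqref{abbacchio} rather than directly with $m$: they guarantee the ordering $\na_2\ge\na_3\ge\cdots\ge1$, so that each increment $\na_l$ is dominated by the partial sum it is appended to, which is exactly what the concavity argument needs.
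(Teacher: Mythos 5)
Your proof is correct and, on the easy parts, follows the paper's argument exactly: part $(ii)$ is the paper's Step~$1$ of Proposition~\ref{prop:stimeimproved} (your accounting of the $i=0$ contribution via $|\omega_0|\le\sqrt{2}$, giving a factor $2$ rather than $1$, is in fact slightly more careful than the paper's, which implicitly treats $|\mathtt{r}_0|\le1$); and the first inequality of $(i)$ is exactly the paper's combination of \eqref{pula2}, \eqref{cappella} and \eqref{abbacchio}, merely written out in full. The interesting part is the second inequality of \eqref{constance2SE}: the paper does not prove it at all — it only cites \cite{Bourgain:1996, Bourgain:2005} and \cite{CLSY} — whereas you give a complete, self-contained argument. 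Your route has two ingredients that deserve to be flagged. First, you rewrite the inequality as $\na_1^\theta+\na_2^\theta+\theta\sum_{l\ge3}\na_l^\theta\ge 2\jap{j}^\theta$, observe that $\jap{j}$ is always one of the $\na_l$'s (so $\jap{j}\le\na_2$ or $\jap{j}=\na_1>\na_2$), and in the only nontrivial case use momentum conservation $\pi(\al-\bt)=0$ to obtain the crucial balance $\na_1\le\sum_{l\ge2}\na_l$. Second, you upgrade the trivial subadditivity $(x+y)^\theta\le x^\theta+y^\theta$ (which would only give coefficient $1$ on $\sum_{l\ge3}\na_l^\theta$ and would not close) to the sharper $\bigl(\sum_{l\ge2}\na_l\bigr)^\theta\le\na_2^\theta+\theta\sum_{l\ge3}\na_l^\theta$ by a telescoping concavity estimate on the partial sums $S_l=\na_2+\cdots+\na_l$, exploiting the monotonicity $\na_2\ge\na_3\ge\cdots\ge1$ guaranteed by the regrouped vector $\na$. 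This is exactly the role of the regrouping $\pm h\mapsto h$ in Definition~\ref{n star}$(i)$, and your remark that plain subadditivity fails to absorb the loss $-2\jap{j}^\theta$ isolates precisely why the cited references go through the same monotone rearrangement. In short: same route as the paper on $(ii)$ and on the first inequality; a genuine, correct, and welcome fill-in of the cited-but-unproved second inequality, with the correct concavity mechanism and the correct use of momentum.
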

\begin{proof}
The first inequality is a direct consequence of formula \eqref{pula2}. Indeed,
recalling Def. \ref{n star},
we have
\[
\sum_{i\neq m_1,m_2}\langle i\rangle^{\theta}|\alpha_i-\beta_i|\leq 
|\alpha_0-\beta_0|+\sum_{j\geq3}\langle m_{j}\rangle^{\theta}
\stackrel{\eqref{abbacchio}}{\leq}\sum_{j\geq3}\langle n_{j}\rangle^{\theta}\,.
\]
The second inequality 
is proved in \cite{Bourgain:1996, Bourgain:2005}, and \cite{CLSY}.

\noindent
Item $(ii)$ can be deduced form Step $1$ in the proof of Proposition \ref{prop:stimeimproved}.
\end{proof}

\begin{proof}[{\bf Proof of Theorem \ref{shulalemma}}]

\noindent
Since, by hypothesis, $f$ 
belongs  to the range of the operator $L_{\omega}$, the Hamiltonian
$L_{\omega}^{-1}f$ is well-defined with coefficients given by 
\[
(L_{\omega}^{-1}f)_{\al,\bt}= \frac{f_{\al,\bt}}{-\imm \omega\cdot(\al-\beta)}\,,\qquad 
\forall \,(\al,\bt)\in\mathtt{R}^{c}\,. 
\]
Recall the coefficients in 
\eqref{coeffSE}. In view of property \eqref{alberellobello} with 
$\tw' = \tw(p,s+\s)$ and $\tw = \tw(p,s)$ and formula \eqref{def:adjaction} ,
we have that, if 
\begin{equation*}
 J_0 := 
 \sup_{\substack{j\in\Z,\,  (\al,\bt)\in\Lambda \\ 
 \al_j+\bt_j\neq 0 \\ |\al-\bt|\leq \tN+2}}
\frac{c^{(j)}_{\ri,\mathtt{w}(s+\s,p)}(\al,\bt) }
{c^{(j)}_{\ri,\mathtt{w}(s,p)}(\al,\bt) |\omega\cdot (\al-\bt)|}<+\infty\,,
\end{equation*}
then
\[
|L_{\omega}^{-1}f|_{r,\mathtt{w}(p,s+\s)}\leq J_0 |f|_{r,\mathtt{w}(p,s)}\,.
\]
Therefore in order to get the result it is sufficient to estimate  the quantity $J_0$.

By an explicit computation using \eqref{coeffSE} we get
\[
J_0 =  
\sup_{\substack{j\in\Z,\,  (\al,\bt)\in\Lambda \\ 
\al_j+\bt_j\neq 0  \\
 |\al-\bt|\leq \tN+2}}
\frac{e^{-\s\pa{\sum_i\jap{i}^{\theta} (\al_i+\bt_i) -2\jap{j}^{\theta}}}}{\abs{\omega\cdot{\pa{\al - \bt}}}}\,.
\]
By Lemma \ref{lem:constance2SE}-(ii), 
we just have to study the case in which \eqref{divisorBistris} holds true.
Recall also that, by Proposition \ref{prop:stimeimproved}, for $\omega\in\mathtt{P}_{\gamma}$, one has the estimates 
\eqref{goodsmalldiv} on the small divisor $|\omega\cdot(\alpha-\beta)|$.

In the following we do not keep track of absolute constants and just denote them by 
$\mathtt{c}$, which may change along the proof.
Since $\ell=\al-\bt$, $|\ell|\leq \tN+2$ we notice that 
$\mathtt{d}=\mathtt{d}(\ell)\leq 4\tN$ and 
$\tau=\tau(\ell)\leq 36 \tN^2$.
Therefore we have
\[
\begin{aligned}
J_0 &\leq 
\frac{(\mathtt{C}(\tN+2))^{6\tau(\ell)}}{\gamma^{{\tau^2(\ell)}}} \exp\left(-
\s\Big(\sum_{i\in\Z} \jap{i}^{\theta} (\al_i+\bt_i) -2\jap{j}^{\theta} \Big)
+{\sum_{\substack{i\in\Z \\ i\neq m_1,m_2}} 4\tau^2(\ell)\ln(1+(\al_i-\bt_i)\langle i\rangle^{2} ) }
\right)
\\
&\stackrel{\mathclap{\eqref{constance2SE}}}{\leq}
\gamma^{- \mathtt{c} \tN^4}\tN^{\mathtt{c} \tN^2}
\exp \left(\sum_{\substack{i\in\Z \\ i\neq m_1,m_2}}\left[-{\s (1-\theta) } \abs{\al_i - \bt_i}\jap{i}^{\theta})\right]
+ \mathtt{c}\tN^4 \ln{\pa{1 + \pa{\al_i - \bt_i}^{2}\jap{i}^{{2}}}} \right) 
\\
&\le \gamma^{- \mathtt{c} \tN^2}\tN^{\mathtt{c} \tN^2}  \exp{ (-\mathtt{c}\tN^4 \sum_{\substack{i\in\Z \\ i\neq m_1,m_2}}H_i(|\al_i - \bt_i|)})\,,
\end{aligned}
\]
where for $0<\s\leq 1$, $i\in \Z$ , we defined 
\begin{equation}\label{defalpha}
H_i(x) := \alpha x\jap{i}^{\theta} - \ln{\pa{1 + x^2\jap{i}^2}}\,,\qquad \alpha:=\frac{\s (1-\theta) }{ \mathtt{c}\tN^4} \,,
\end{equation}
where $x:= |\al_i - \bt_i| \geq 1$.
We observe that there exists $X(\al)$ such that the following inequalities hold: 
\begin{equation}\label{albero1bis}
\al x \jap{i}^\theta - \ln{\pa{1 + {x^2\jap{i}^2}}} 
\ge 0\,,
\quad \mbox{if}\quad \jap i \ge X(\al):=\left(\frac{2}{\alpha\theta}\right)^{\frac{1}{\theta}}\,. 
\end{equation}
This can be easily checked by an explicit computation by studying the 
function $f(y):=\alpha xy^{\theta}-\ln(1+x^2y^2)$ for $x,y\geq1$.

 Consequently
\begin{equation}\label{albero1}
J_0 \le \gamma^{- \mathtt{c} \tN^2}\tN^{\mathtt{c} \tN^2} 
\exp\big(- \mathtt{c}\tN^4 \inf_{ x\ge 1} \sum_{i:\jap i\le X(\al)} H_i(x)\big)\,.
\end{equation}
Let us compute $\inf_{x\ge 1}  H_i(x)$. We have
\begin{equation}\label{caldo1}
H_i(x) \ge \hat H_i(x)
- \ln{\pa{1 + \jap{i}^2}} \,,\qquad \hat H_i(x):= \al x \jap{i}^{\theta} - \ln{\pa{1 +x^2}}\,.
\end{equation}
We first  note that
\[
\hat H'_i(x) = \frac{\alpha \jap{i}^\theta(1+x^2)-2x}{1+x^2}\,,
\]
we then distinguish two cases.

\noindent
$(i)$ If one has that $\alpha\jap{i}^{\theta}>1$, then there are no solutions of $\hat H'_i(x)=0$. Therefore
we bound (see \eqref{caldo1})
\begin{equation}\label{caldo4}
\begin{aligned}
\inf_{x\ge 1}  H_i(x)&\geq \alpha\jap{i}^{\theta}-\ln(2)-\ln(1+\jap{i}^{2})\stackrel{\jap{i}\leq X(\alpha)}{\geq}-\ln(2)-\ln(1+X^2(\alpha))
\\&\stackrel{\eqref{albero1bis}}{\geq}
-\ln2-\ln\Big(1+\big(\frac{2}{\alpha\theta}\big)^{\frac{2}{\theta}}\Big)\,.
\end{aligned}
\end{equation}

\smallskip
\noindent
$(ii)$ One the contrary assume $\alpha\jap{i}^{\theta}\leq1$. This means that one must have
\begin{equation}\label{caldo2}
1\leq \jap{i}\leq \left(\frac{1}{\alpha}\right)^{\frac{1}{\theta}}\stackrel{\eqref{albero1bis}}{\leq}
\left(\frac{2}{\alpha\theta}\right)^{\frac{1}{\theta}}\,.
\end{equation}
In this case we have $\hat H'_i(x)=0$ for 
\[
x=\frac{1+\sqrt{1-(\alpha\jap{i}^{\theta})^2}}{\alpha\jap{i}^{\theta}}\,.
\]
Therefore we have the bound
\begin{equation}\label{caldo3}
\begin{aligned}
\inf_{x\geq1}H_i(x) &\stackrel{\eqref{caldo1}}{\ge}1+\sqrt{1-(\alpha\jap{i}^{\theta})^2} 
- \ln{\pa{1 +\Big(\frac{1+\sqrt{1-(\alpha\jap{i}^{\theta})^2}}{\alpha\jap{i}^{\theta}}\Big)^2}}-\ln(1+\jap{i}^{2})
\\&
\stackrel{\eqref{caldo2}}{\geq}1-\ln\big(1+\frac{4}{\alpha^2}\big)-\ln\Big(1+\big(\frac{2}{\alpha\theta}\big)^{\frac{2}{\theta}}\Big)
\geq -\ln\left[\Big(1+\frac{4}{\alpha^2}\Big)\Big(1+\frac{2}{\alpha\theta}\Big)  \right]^{\frac{2}{\theta}}
\\&\geq
-\ln\Big(1+\frac{16}{\alpha^3\theta}\Big)^{\frac{2}{\theta}}\,.
\end{aligned}
\end{equation}
By \eqref{caldo4}-\eqref{caldo3} we infer
 that
 \[
 \inf_{x\geq1}H_i(x) \geq -\ln\Big(1+\frac{16}{\alpha^3\theta}\Big)^{\frac{2}{\theta}}\,.
\]
The latter bound, together with \eqref{albero1}, implies 
\[
\begin{aligned}
J_0 &\leq\gamma^{- \mathtt{c} \tN^2}\tN^{\mathtt{c} \tN^2} \exp\Big(\mathtt{c}\tN^4 X(\alpha)
\ln\Big(1+\frac{16}{\alpha^3\theta}\Big)^{\frac{2}{\theta}}\Big)
\\&
=\gamma^{- \mathtt{c} \tN^2}\tN^{\mathtt{c} \tN^2}
\Big(1+\frac{16}{\alpha^3\theta}\Big)^{\frac{2}{\theta}\mathtt{c}\tN^4 X(\alpha)}
\\&\stackrel{\eqref{defalpha}}{=}
\gamma^{- \mathtt{c} \tN^2}\tN^{\mathtt{c} \tN^2}
\exp\Big( \frac{\mathtt{c}\tN^{4}}{\theta}\Big(\frac{\mathtt{c}\tN^{4}}{\s\theta(1-\theta)}\Big)^{\frac{1}{\theta}} 
\ln\Big(1+\Big(\frac{\mathtt{c}\tN^4}{\s\theta(1-\theta)}\Big)^{3}\Big) \Big)
\\&=\gamma^{- \mathtt{c} \tN^2}\tN^{\mathtt{c} \tN^2}
\exp\Big(\s(1-\theta) \frac{\mathtt{c}\tN^{4}}{\s\theta(1-\theta)}\Big(\frac{\mathtt{c}\tN^{4}}{\s\theta(1-\theta)}\Big)^{\frac{1}{\theta}} 
\ln\Big(1+\Big(\frac{\mathtt{c}\tN^4}{\s\theta(1-\theta)}\Big)^{3}\Big) \Big)
\\&\leq
\gamma^{- \mathtt{c} \tN^2}\exp\big(\mathtt{c} \tN^2\ln(\tN)\big)
\exp\Big(\s(1-\theta)\Big(\frac{\mathtt{c}\tN^{4}}{\s\theta(1-\theta)}\Big)^{2+\frac{1}{\theta}} \Big)
\leq\gamma^{- \mathtt{c} \tN^2}\exp\Big(\Big(\frac{\mathtt{c}\tN^{4}}{\s\theta(1-\theta)}\Big)^{2+\frac{1}{\theta}} \Big)\,,
\end{aligned}
\]
taking $\mathtt{c}$ larger enough in each inequality.
This is the desired bound \eqref{controlJ0}.
\end{proof}

\vspace{0.5cm}
\gr{Acknowledgements.} 
The authors have been  supported by the  research project 
PRIN 2020XBFL ``Hamiltonian and dispersive PDEs" of the 
Italian Ministry of Education and Research (MIUR).  
R. Feola has been supported by the  research project PRIN 2022HSSYPN ``Turbulent Effects vs 
Stability in Equations from Oceanography''
of the 
Italian Ministry of Education and Research (MIUR).  
J.E. Massetti has been supported by 
the  research project 
PRIN 2022FPZEES ``Stability in Hamiltonian dynamics and beyond"
of the 
Italian Ministry of Education and Research (MIUR).  
The authors acknowledge the support of  INdAM-GNAMPA.

\vspace{0.5cm}

\gr{Declarations}. Data sharing is not applicable to this article as no datasets were generated or analyzed during the current study.

\noindent
Conflicts of interest: The authors have no conflicts of interest to declare.


\begin{thebibliography}{10}

\bibitem{Arnold:1963}
V.~I. Arnold.
\newblock Proof of a theorem of \uppercase{a}. \uppercase{n}.
  \uppercase{k}olmogorov on the invariance of quasi-periodic motions under
  small perturbations of the \uppercase{H}amiltonian.
\newblock {\em Usp. Mat. Nauk SSSR}, 18(13), 1963.

\bibitem{BBHM}
P.~Baldi, M.~Berti, E.~Haus, and R.~Montalto.
\newblock Time quasi-periodic gravity water waves in finite depth.
\newblock {\em Invent. Math.}, 214:739--911, 2018.

\bibitem{KdVAut}
P.~Baldi, M.~Berti, and R.~Montalto.
\newblock {KAM} for autonomous quasi-linear perturbations of {K}d{V}.
\newblock {\em Ann. I. H. Poincar\'e (C) Anal. Non Lin\'eaire},
  33(6):1589--1638, 2016.

\bibitem{BambuBertiMagi:2011}
D.~Bambusi, M.~Berti, and E.~Magistrelli.
\newblock Degenerate {KAM} theory for partial differential equations.
\newblock {\em J. Differential Equations}, 250(8):3379--3397, 2011.

\bibitem{Bambusi-Grebert:2006}
D.~Bambusi and B.~Gr\'ebert.
\newblock {B}irkhoff normal form for partial differential equations with tame
  modulus.
\newblock {\em Duke Math. J.}, 135(3):507--567, 2006.

\bibitem{BBiP2}
M.~Berti, L.~Biasco, and M.~Procesi.
\newblock {KAM} theory for reversible derivative wave equations.
\newblock {\em Archive for Rational Mechanics and Analysis}, 212(3):905--955,
  2014.

\bibitem{BCP}
M.~Berti, L.~Corsi, and M.~Procesi.
\newblock An abstract {N}ash-{M}oser theorem and quasi-periodic solutions for
  {NLW} and {NLS} on compact {L}ie groups and homogeneous manifolds.
\newblock {\em Comm. Math. Phys.}, 334(3):1413--1454, 2015.

\bibitem{BMP:CMP}
L.~Biasco, J.~E. Massetti, and M.~Procesi.
\newblock An abstract {B}irkhoff normal form theorem and exponential type
  stability of the 1d {NLS}.
\newblock {\em Comm. Math. Phys.}, 375(3):2089--2153, 2020.

\bibitem{BMP:AHP}
L.~Biasco, J.~E. Massetti, and M.~Procesi.
\newblock Almost periodic invariant tori for the {NLS} on the circle.
\newblock {\em Ann. Inst. H. Poincar\'{e} Anal. Non Lin\'{e}aire},
  38(3):711--758, 2021.

\bibitem{BMP:weak}
L.~Biasco, J.~E. Massetti, and M.~Procesi.
\newblock Small amplitude weak almost periodic solutions for the 1d {N}{L}{S}.
\newblock {\em Duke Math. J.}, 172(14):2643--2714, 2023.

\bibitem{Bourgain:1996}
J.~Bourgain.
\newblock Construction of approximative and almost periodic solutions of
  perturbed linear {S}chr\"odinger and wave equations.
\newblock {\em Geom. Funct. Anal.}, 6(2):201--230, 1996.

\bibitem{B5}
J.~Bourgain.
\newblock {\em Green's function estimates for lattice {S}chr\"odinger operators
  and applications}, volume 158 of {\em Annals of Mathematics Studies}.
\newblock Princeton University Press, Princeton, NJ, 2005.

\bibitem{Bourgain:2005}
J.~Bourgain.
\newblock On invariant tori of full dimension for 1{D} periodic {NLS}.
\newblock {\em J. Funct. Anal.}, 229(1):62--94, 2005.

\bibitem{Boj}
Jean Bourgain.
\newblock Construction of quasi-periodic solutions for {H}amiltonian
  perturbations of linear equations and applications to nonlinear {PDE}.
\newblock {\em Internat. Math. Res. Notices}, pages 475ff., approx.\ 21 pp.\
  (electronic), 1994.

\bibitem{ChierP}
L.~Chierchia and P.~Perfetti.
\newblock Second order{ H}amiltonian equations on $ \mathbb{T}^\infty$ and
  almost-periodic solutions.
\newblock {\em J. Differential Equations}, 116, 1995.

\bibitem{CLSY}
H.~Cong, J.~Liu, Y.~Shi, and X.~Yuan.
\newblock {T}he {S}tability of {F}ull {D}imensional {K}{A}{M} tori for
  {N}onlinear {S}chr\"odinger equation.
\newblock {\em J. Differential Equations}, 264(7):4504--4563, 2018.

\bibitem{CongYuan:2020}
H.~Cong and X.~Yuan.
\newblock The existence of full dimensional invariant tori for 1-dimensional
  nonlinear wave equation.
\newblock {\em Ann. I. H. Poincar\'e (C) Anal. Non Lin\'eaire},
  38(03):759--786, 2020.

\bibitem{CW}
Walter Craig and C.~Eugene Wayne.
\newblock Newton's method and periodic solutions of nonlinear wave equations.
\newblock {\em Comm. Pure Appl. Math.}, 46(11):1409--1498, 1993.

\bibitem{Eliasson:cetraro}
L.~H. Eliasson.
\newblock Perturbations of linear quasi-periodic system.
\newblock In {\em Dynamical systems and small divisors ({C}etraro, 1998)},
  volume 1784 of {\em Lecture Notes in Math.}, pages 1--60. Springer, Berlin,
  2002.

\bibitem{EK}
L.~Hakan Eliasson and Sergei~B. Kuksin.
\newblock K{AM} for the nonlinear {S}chr\"odinger equation.
\newblock {\em Ann. of Math. (2)}, 172(1):371--435, 2010.

\bibitem{EGK}
L.H. Eliasson, B.~Gr{\'e}bert, and S.B. Kuksin.
\newblock {K}{A}{M} for the nonlinear beam equation.
\newblock {\em Geom. Funct. Anal.}, 26:1588--1715, 2016.

\bibitem{FeoGiu:memo}
R.~Feola and F.~Giuliani.
\newblock {Q}uasi-periodic {T}raveling {W}aves on an {I}nfinitely {D}eep
  {P}erfect {F}|luid {U}nder {G}ravity.
\newblock to appear on Memoirs of AMS, 2020.

\bibitem{FeoGiuPro2020}
R.~Feola, F.~Giuliani, and M.~Procesi.
\newblock Reducible {K}{A}{M} {T}ori for the {D}egasperis--{P}rocesi
  {E}quation.
\newblock {\em Comm. Math. Phys,}, 377:1681--1759, 2020.

\bibitem{FeoMass:Beam}
R.~Feola and J.~E. Massetti.
\newblock Sub-exponential stability for the beam equation.
\newblock {\em J. Differential Equations}, 256:188--242, 2023.

\bibitem{GengXu2013}
J.~Geng and X.~Xu.
\newblock {A}lmost periodic solutions of one dimensional {S}chr\"odinger
  equation with the external parameters.
\newblock {\em J. Dynam. Differential Equations}, 25(2):435--450, 2013.

\bibitem{GengYou2006}
J.~Geng and J.~You.
\newblock A {K}{A}{M} {T}heorem for {H}amiltonian {P}artial {D}ifferential
  {E}quations in {H}igher {D}imensional {S}paces.
\newblock {\em Comm. Math. Phys,}, 262(343--372), 2006.

\bibitem{IonescuSQG}
J.~G\'omez-Serrano, A.~D. Ionescu, and J.~Park.
\newblock Quasiperiodic solutions of the generalized {S}{Q}{G} equation.
\newblock preprint arXiv:2303.03992, 2023.

\bibitem{Kolmogorov:1954}
A.~N. Kolmogorov.
\newblock On conservation of conditionally periodic motions for a small change
  in {H}amilton's function.
\newblock {\em Dokl. Akad. Nauk SSSR (N.S.)}, 98:527--530, 1954.

\bibitem{Ku2}
S.~Kuksin.
\newblock A {KAM} theorem for equations of the {K}orteweg-de {V}ries type.
\newblock {\em Rev. Math. Phys.}, 10(3):1--64, 1998.

\bibitem{KP}
Sergej Kuksin and J{\"u}rgen P{\"o}schel.
\newblock Invariant {C}antor manifolds of quasi-periodic oscillations for a
  nonlinear {S}chr\"odinger equation.
\newblock {\em Ann. of Math.}, 143(1):149--179, 1996.

\bibitem{LY}
J.~Liu and X.~Yuan.
\newblock A {KAM} {T}heorem for {H}amiltonian partial differential equations
  with unbounded perturbations.
\newblock {\em Comm. Math. Phys,}, 307:629--673, 2011.

\bibitem{Moser:1962}
J.~Moser.
\newblock On invariant curves of area-preserving mappings of an annulus.
\newblock {\em Nachr. Akad. Wiss. G\"ottingen Math.-Phys. Kl. II}, 1962:1--20,
  1962.

\bibitem{PoH}
Henri Poincar{\'e}.
\newblock {\em New methods of celestial mechanics. {V}ol. 3}, volume~13 of {\em
  History of Modern Physics and Astronomy}.
\newblock American Institute of Physics, New York, 1993.
\newblock Integral invariants and asymptotic properties of certain solutions,
  Translated from the French, Revised reprint of the 1967 English translation,
  With endnotes by G. A. Merman, Edited and with an introduction by Daniel L.
  Goroff.

\bibitem{Po2}
J.~P{\"o}schel.
\newblock A {KAM}-theorem for some nonlinear partial differential equations.
\newblock {\em Ann. Scuola Norm. Sup. Pisa Cl. Sci. (4)}, 23(1):119--148, 1996.

\bibitem{Poschel:2002}
J.~P{\"o}schel.
\newblock On the construction of almost periodic solutions for a nonlinear
  {S}chr\"odinger equation.
\newblock {\em Ergodic Theory Dynam. Systems}, 22(5):1537--1549, 2002.

\bibitem{PP3}
M.~Procesi and C.~Procesi.
\newblock A {KAM} algorithm for the non--linear {S}chr\"odinger equation.
\newblock {\em Advances in Math.}, 272:399--470, 2015.

\bibitem{russmann2001}
H.~R\"ussmann.
\newblock Invariant tori in non-degenerate nearly integrable {H}amiltonian
  systems.
\newblock {\em Regul. Chaotic Dyn.}, 6(2):119--204, 2001.

\end{thebibliography}

\end{document}